\documentclass[11pt]{amsart}
\usepackage{amsthm, amssymb, latexsym, amsmath, color}
\usepackage[a4paper, total={6.5in, 8in}]{geometry}
\usepackage[dvipsnames,svgnames,table]{xcolor}
\usepackage[colorlinks=true,linkcolor=RoyalBlue,urlcolor=RoyalBlue,citecolor=PineGreen]{hyperref}
\usepackage{tikz}
\usetikzlibrary{angles,quotes}
\usepackage{tikz-cd} 
\usetikzlibrary{arrows.meta}
\usepackage{comment, caption}
\usepackage{graphicx,subcaption}
\usepackage{float}

\usepackage{pgf,tikz,pgfplots}
\pgfplotsset{compat=1.14}
\usepackage{pgf,tikz,pgfplots}
\usepackage{mathrsfs}
\usetikzlibrary{arrows}
\usepackage{mathrsfs}
\usetikzlibrary{arrows}
\usepackage{capt-of}
\usetikzlibrary{decorations.pathreplacing}
\usetikzlibrary{cd}
\usetikzlibrary{positioning}
\usetikzlibrary{calc}
\usepackage{algorithmic}
\usepgfplotslibrary{fillbetween}
\usetikzlibrary{intersections}
\usetikzlibrary{patterns}

\DeclareMathOperator{\rank}{rank}

\usepackage[nameinlink]{cleveref}

\title{Point sets determining few angles are almost contained in a line or circle}
 \author{Krishnendu Bhowmick}
 \address{Krishnendu Bhowmick \\ School of Electrical \& Computer Engineering \\ Tel Aviv University, Israel.}
 \email{krish.combi@gmail.com}

  \author{Oliver Roche-Newton}
 \address{Oliver Roche-Newton \\ Institute of Analysis and Number Theory, TU Graz, Austria.}
 \email{o.rochenewton@gmail.com}

 \author{Audie Warren}
 \address{Audie Warren \\ Johann Radon Institute for Computational and Applied Mathematics, Linz, Austria.}
 \email{audie.warren@oeaw.ac.at}

\newcommand{\cA}{\mathcal{A}}

\newcommand{\cD}{\mathcal{D}}

\newtheorem{lemma}{Lemma}

\newtheorem{theorem}{Theorem}

\newtheorem{construction}{Construction}

\theoremstyle{remark}

\begin{document}

 \begin{abstract}
   We prove a structural theorem for point sets in $\mathbb R^2$ which determine few pinned angles. More precisely, we prove the existence of an absolute constant $c>0$ such that if $n$ is sufficiently large and $P$ is a set of $n$ points then there exists a point $q \in P$ which determines at least $n^{1+c}$ distinct angles to other pairs of points of $P$, provided that $P$ is not of one of the following exceptional forms:
   \begin{itemize}
       \item all but at most one of the points of $P$ lie on a line,
       \item all but two points of $P$ lie on a line, and the two exceptional points are symmetric with respect to the line,
       \item all the points of $P$ lie on a circle,
       \item all but one of the points of $P$ lie on a circle, and the exceptional point is the centre of the circle.
   \end{itemize}
   As a consequence, we answer a question of Corr\'{a}di, Erd\H{o}s and Hajnal by showing that if $n$ is sufficiently large and $P \subseteq \mathbb R^2$ has cardinality $n$ and is not contained on a single line, then $P$ determines at least $n-2$ angles. Moreover, we prove that the unique point set attaining this minimum is the regular $n$-gon.
 \end{abstract}

\maketitle
\section{Introduction}

This paper considers the problem of lower bounding the number of angles determined by a set of $n$ points in the plane. For three distinct points $p,q,r \in \mathbb R^2$, let $\mathcal A(p,q,r) \in [0, \pi]$ denote the angle determined by these three points with $q$ as the centre. For a finite set $P \subset \mathbb R^2$, the notation
\[
\mathcal A(P):= \{ \mathcal A(p,q,r) : p,q,r \in P \text{ and $p,q$ and $r$ are distinct} \}.
\]
is used for the set of angles determined by $P$. We often consider the set of angles pinned at a fixed centre $q \in \mathbb R^2$, for which we use the notation
\[
\mathcal A_q(P):= \{ \mathcal A(p,q,r) : p,r \in P \setminus \{q\} \text{ and $p$ and $r$ are distinct} \}.
\]
A starting observation is that, as long as $P$ is not contained in a single line, there exists $q \in P$ such that\footnote{Here and throughout the paper, the standard notation $\ll, \,\gg$ and respectively $O$ and $\Omega$ is applied to positive quantities in the usual way. That is, $X\gg Y$, $Y \ll X,$ $X=\Omega(Y)$ and $Y=O(X)$ all mean that $X\geq cY$, for some absolute constant $c>0$. }
\begin{equation} \label{linear}
|\mathcal A_q(P)| \gg |P|.
\end{equation}
  One method for proving \eqref{linear} is via an application of a classical result of Beck \cite{Be}, which states that any set of $n$ points in $\mathbb R^2$ either contains a line with $\Omega(n)$ points from $P$, or determines $\Omega(n^2)$ distinct lines. If there are $\Omega(|P|)$ points from $P$ on a line $\ell$, we may fix any point $q \in  P \setminus \ell$ and a point $p \in P \cap \ell$, and observe that the set
\[
\{\mathcal A(p,q,r) : r \in P \cap \ell, r \neq p \} \subseteq \mathcal A_q(P)
\]
has cardinality at least $\frac{|\ell \cap P|-1}{2} \gg |P|$. Otherwise, it follows from Beck's Theorem that there is a point $q \in P$ which determines $\Omega(|P|)$ directions with the other elements of $P$, and this immediately implies that $| \cA_q(P)| \gg |P|$. We will come to the problem of determining the correct constant hidden in the asymptotic notation soon.

This linear lower bound cannot be improved in general, and there are two simple families of examples which should be considered.
\begin{itemize}
    \item If $P$ is a set of $n$ points evenly distributed on a circle then $P$ determines $O(n)$ distinct angles. We can also adjoin the centre of the circle to the point set and still preserve the inequality $|\cA(P)| \ll n$.
    \item If we fix a point $p \in \mathbb R^2$ and then draw a line $\ell$ in the plane which does not contain $p$, we can then arrange $n$ points on $\ell$ so that the direction angles from $p$ to these points form an arithmetic progression. The set $P$ formed by taking the union of these $n$ points on $\ell$ with $p$ satisfies $|\cA(P)| \ll n$. Moreover, we can adjoin an additional point $p'$ to this set and preserve this inequality, where $p'$ is the reflection of $p$ in the line $ \ell$.
\end{itemize}

\begin{figure}[h]

\begin{tikzpicture}
    \def\n{10}
    \def\r{2}
    \draw (0,0) circle (\r);
    \node[circle,fill,inner sep=1pt] at (0,0) {};
    \foreach \s in {1,...,\n}{
        \pgfmathsetmacro\angle{360/\n * (\s - 1)}
        \coordinate (P\s) at (\angle:\r);
        \node[circle,fill,inner sep=1pt] at (P\s) {};
    }

\begin{scope}[shift={(0,5cm)}]
        \def\n{10}
    \def\r{2}
    \draw (0,0) circle (\r);
    \foreach \s in {1,...,\n}{
        \pgfmathsetmacro\angle{360/\n * (\s - 1)}
        \coordinate (P\s) at (\angle:\r);
        \node[circle,fill,inner sep=1pt] at (P\s) {};
    }
\end{scope}

  \begin{scope}[shift={(5cm,0)}, scale=0.5]  \draw (-1,0) -- (9,0);
    \node[circle,fill,inner sep=1pt] at (0,1) {};
    \node[circle,fill,inner sep=1pt] at (0,-1) {};
    \node[circle,fill,inner sep=1pt] at (0,0) {};
    \node[circle,fill,inner sep=1pt] at (0.3639,0) {};
    \node[circle,fill,inner sep=1pt] at (0.8390,0) {};
    \node[circle,fill,inner sep=1pt] at (1.7320,0) {};
      \node[circle,fill,inner sep=1pt] at (    5.6712,0) {};

      \end{scope}

  \begin{scope}[shift={(5cm,5cm)}, scale=0.5]  \draw (-1,0) -- (9,0);
    \node[circle,fill,inner sep=1pt] at (0,1) {};
    \node[circle,fill,inner sep=1pt] at (0,0) {};
    \node[circle,fill,inner sep=1pt] at (0.3639,0) {};
    \node[circle,fill,inner sep=1pt] at (0.8390,0) {};
    \node[circle,fill,inner sep=1pt] at (1.7320,0) {};
      \node[circle,fill,inner sep=1pt] at (    5.6712,0) {};

      \end{scope}

        \begin{scope}[shift={(2.5cm,8cm)}, scale=0.5]  \draw (-1,0) -- (9,0);
    \node[circle,fill,inner sep=1pt] at (0,0) {};
    \node[circle,fill,inner sep=1pt] at (7,0) {};
    \node[circle,fill,inner sep=1pt] at (4.5,0) {};
    \node[circle,fill,inner sep=1pt] at (4.9,0) {};
    \node[circle,fill,inner sep=1pt] at (0.7,0) {};
    \node[circle,fill,inner sep=1pt] at (1.5,0) {};
      \node[circle,fill,inner sep=1pt] at (    3.5,0) {};

      \end{scope}
    
\end{tikzpicture}

\caption{This picture shows five degenerate examples of sets of $n$ points determining $O(n)$ distinct angles (the first example determines only $2$ angles). The main result of this paper shows that, for $n$ sufficiently large, the estimate $|\cA(P)| \geq n^{1+c}$ holds for any $P$ which does not have one of these incidence structures.}

\end{figure}

The main result of this paper gives a significantly improved lower bound for the size of $\cA(P)$ provided that $P$ does not contain a highly rich line or circle. Moreover, we obtain this lower bound for the pinned variant of the problem.

\begin{theorem}\label{thm:main}
   There exists an absolute constant $c>0$ such that, for $n$ sufficiently large, if $P \subset \mathbb R^2$ has cardinality $n$ then either there exists $q \in P$ such that $|\cA_q(P)| \geq n^{1+c}$, or $P$ is of one of the following forms: 
   \begin{enumerate}
       \item all of the points of $P$ are contained on a single line,
       \item $n-1$ points of $P$ are on a single line,
       \item $n-2$ points of $P$ are on a single line, and the other two points are symmetric with respect to the line,
       \item all of the points of $P$ are contained on a single circle,
       \item $n-1$ points of $P$ are on a circle and the other point is the centre of the circle.
   \end{enumerate}
\end{theorem}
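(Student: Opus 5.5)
We will argue by contradiction: assume that every $q\in P$ satisfies $|\cA_q(P)|< n^{1+c}$ and show that $P$ must be one of the five listed configurations. The first step is a reduction to directions. If a point $q$ determines $D_q$ distinct directions to $P\setminus\{q\}$, write these directions as angles $\theta_1,\dots,\theta_{D_q}\in\mathbb R/\pi\mathbb Z$. Every difference $\theta_i-\theta_j$ then yields an element of $\cA_q(P)$, up to the two-to-one ambiguity between $\alpha$ and $\pi-\alpha$. Hence $|\cA_q(P)|\gg |\Theta_q-\Theta_q|$, where $\Theta_q$ is the set of these directions. The hypothesis therefore forces the difference set of $\Theta_q$ to be small relative to $|\Theta_q|^{1+c}$ for every $q$.

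The second step splits into cases according to Beck's theorem.
\begin{itemize}
\item[] \textbf{Case (a): no line contains many points.} If no line contains more than $n^{1-\delta}$ points of $P$, then a positive proportion of the points $q$ determine $\gg n^{1-\delta'}$ directions. We then need an expansion input. The product $\prod_{p}(p-q)/|p-q|$ sends the direction set into the unit circle, which we view as a multiplicative group. The aim is to show that the direction sets, taken across many centres $q$, cannot all have small doubling. For this we would count, for pairs of centres $q,q'$, the collinear triples and concyclic quadruples through an energy/incidence argument in the style of Elekes--Rónyai. Small doubling of $\Theta_q$ for many $q$ would force a large energy, meaning many solutions to $\mathcal A(p,q,r)=\mathcal A(p',q,r')$. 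Each such equality says that $q$ lies on one of boundedly many circular arcs through $p,r$, or through $p',r'$, with prescribed inscribed angle. A Guth--Katz or Pach--Sharir type incidence bound for points and circles (or a Szemerédi--Trotter bound after inversion) then shows that such an energy is impossible unless a single circle contains $n^{1-o(1)}$ points.
\item[] \textbf{Case (b): a rich line or circle.} In this case the rich curve $\gamma$ contains $n^{1-\delta}$ points.
\end{itemize}

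The third step is the structural bootstrapping from a rich curve to the exact configurations. Suppose first that $\gamma$ is a line $\ell$ and $q\notin\ell$. The angles $\mathcal A(p,q,r)$ with $p,r\in\ell$ correspond to differences of $\arctan$-type coordinates of the points of $\ell$. If there are two points $q,q'$ off $\ell$ that are not mirror images in $\ell$, then an Elekes--Rónyai/Elekes--Szabó type result (expanding polynomial/rational functions) should give $\max(|\cA_q|,|\cA_{q'}|)\ge n^{1+c}$. The reason is that the map exchanging the two viewing-angle parametrizations is not of the special additive form; it is additive only in the mirror-symmetric case. If there are three points off $\ell$, at least two of them are non-symmetric, which leaves only configurations (1)--(3). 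We also need to rule out having only $n-n^{1-\delta}$ points on $\ell$. For this, note that a point $q\notin\ell$ together with many points off $\ell$ produces new directions at $q$ that combine with the set $\Theta_q\cap\ell$ to give sum-set growth, via Plünnecke or a Balog--Szemerédi--Gowers style argument.

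The circle case is analogous. For $q$ on the circle, the inscribed-angle theorem gives additive structure in arc coordinates. For $q$ off the circle, it is not the centre, and the angle map is a nonlinear Möbius-type function of the arc coordinates. Elekes--Rónyai again gives growth unless $q$ is the centre. This leaves configurations (4) and (5).

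The main obstacle I expect is Case (a) together with the "many but not all points on the curve" subcase. There one needs a genuine power-saving expansion ($n^{c}$ beyond linear) uniformly over the centre, rather than merely averaged. I would first try an energy-and-pigeonhole argument: fix a popular centre, and use the Elekes--Szabó theorem for the explicit algebraic relation between angles seen from two different centres. This converts small pinned angle sets at two generic centres into a contradiction, except when the relevant surface is degenerate. The degenerate cases should be exactly those of lines, circles, centres, and reflections.
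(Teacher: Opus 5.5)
Your treatment of rich lines and circles matches the paper in spirit. The paper uses the Elekes--Nathanson--Ruzsa convexity bound $\max\{|A-A|,|f(A)-f(A)|\}\gg|A|^{5/4}$, with one pin on the circle and one pin off it (not the centre), or two non-symmetric pins off the line. Only the $\ge n^{9/10}$ points on the curve and these one or two pins are used, so points off the curve never need to be controlled, and your Pl\"unnecke/BSG step for ``many but not all points on the curve'' is unnecessary.

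The genuine gap is Case (a), which is the heart of the theorem, and you give it no working mechanism. Few pinned angles at $q$ give an angle energy $\gg n^{3-c}$ at $q$. The trivial upper bound, when no rich line passes through $q$, is $O(n^3)$. So you need a power-saving bound on pinned angle energy, which is essentially the open problem itself: no superlinear bound was previously known even in general position. The route you sketch does not supply it:
\begin{itemize}
\item The equality $\cA(p,q,r)=\cA(p',q,r')$ does not prescribe the inscribed angle, so it does not put $q$ on a fixed circle through $p,r$.
\item If you fix the angle instead, you face $n$ points against $\sim n^2$ circles. At that scale point--circle incidence bounds give nothing better than the trivial $O(n^2)$ count.
\item Elekes--Szab\'o gives nothing either. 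Directions from two centres are just independent coordinates of $z$. With three centres, $P$ supplies only about $n$ points of the surface $\theta_3=F(\theta_1,\theta_2)$ inside $A_1\times A_2\times A_3$ with $|A_i|\approx n$, far below the $n^{2-\eta}$ threshold. Replacing the $A_i$ by structured hulls of size $n^{1+O(c)}$ does not change this.
\end{itemize}

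The missing idea is a counting bound independent of the sizes of the sets, which the paper obtains as follows.
\begin{enumerate}
\item It encodes directions multiplicatively as $D_p(z)=(z-p)/(\overline z-\overline p)\in\mathbb C^*$, so that $|D_p(P)D_p(P)^{-1}|\ll|\cA_p(P)|$.
\item Beck-type preprocessing gives three pins $a_1,a_2,a_3$ and $\Omega(n)$ points on which all three direction maps are $O(1)$-to-one.
\item Three successive applications of the weak polynomial Freiman--Ruzsa theorem (Gowers--Green--Manners--Tao, via Harrison--Mudgal--Schmidt) give $P'$ with $|P'|\gg n^{1-O(c)}$ and $D_{a_i}(P')\subseteq\Gamma$ for a single subgroup $\Gamma$ of rank $O(c\log n)$.
\item Eliminating $z$, each $z\in P'$ gives a solution in $\Gamma^5$ of one fixed linear equation in $s,t,s/u,t/u,st/u$.
\item The Evertse--Schlickewei--Schmidt subspace theorem bounds the non-degenerate solutions by $e^{O(\mathrm{rank})}=n^{O(c)}$.
\item Degenerate solutions are controlled because fixing $s$, $t$, $u$ or a ratio such as $s/t$ confines $z$ to a line, or to a line or circle through two pins (the cross-ratio of $z_1,z_i,a_1,a_2$ is real).
\end{enumerate}
This yields $n^{1-O(c)}\ll|P'|\ll T\,n^{O(c)}$, i.e. a line or circle with $n^{1-\epsilon}$ points. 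That is exactly the input your rich-curve step needs.
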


Very little was previously known about the structure of point sets determining few angles. Even for the extreme case when $P$ is in general position - i.e. $P$ contains at most $2$ points on any line and at most $3$ points on any circle - the problem of obtaining a meaningful lower bound for $|\cA(P)|$ was wide open. Lower bounds on the size of $\cA(P)$ assuming additional information about $P$ were given in two recent papers; Konyagin, Passant and Rudnev \cite{KPR} shows that if $P$ is in convex position and is not contained on a circle then $|\cA(P)| \gg |P|^{5/4}$, and Roche-Newton \cite{Ro} showed that if $P=B \times B$ then $|\cA(P)| \gg |P|^{1+ \frac{1}{28}}$. Both \cite{KPR} and \cite{Ro} mention the open problem of determining a better than linear bound under the assumption that $P$ is a point set in general position. From the other side, Fleischmann et al. \cite{FKMPPW} showed that there exist point sets in general position determining $O(|P|^2)$ distinct angles, and a simpler explanation of this fact can also be found in \cite{KPR}. Ascoli et al. \cite{ABDLMMPRV} conjectured that any point set in general position determines $\Omega(|P|^2)$ distinct angles.

The problem of how many angles are determined by a point set may be seen as a variant of the more famous Erd\H{o}s distance problem. The celebrated work of Guth and Katz \cite{GuKa} essentially resolved this problem by proving that any point set $P \subset \mathbb R^2$ determines $\Omega\left ( n/ \log n \right )$ distinct distances. The integer grid $[\sqrt n] \times [\sqrt n]$ determines $\Theta \left ( n / \sqrt {\log n} \right) $ distinct distances. The inverse problem of classifying the sets which determine close to the minimum number of distances is wide open. It was suggested by Erd\H{o}s \cite{Er} that extremal configurations for this problem must have a lattice-like structure. In the same paper, it was conjectured that an optimal configuration of $n$ points must contain $\Omega(n^{1/2})$ points on a line, although it remains an open problem to prove that such a point set must contain $n^c$ points on a line for some absolute constant $c>0$. On the other hand, it was shown by Raz, Roche-Newton and Sharir \cite{RRS} that any set of $n$ points determining fewer than $n/5$ distances cannot contain more than $n^{43/52+o(1)}$ points on a line, improving an earlier result of Sheffer, Zahl and de Zeeuw \cite{SZZ}.

\subsection{A problem of Corr\'{a}di, Erd\H{o}s and Hajnal}

The first appearance that we have been able to find in the literature for the problem of determining the number of angles given by a point set was in a note of Erd\H{o}s \cite{ErdosConjecture}. He writes:

\textit{``Finally let me state an old and completely forgotten question of Corrádi, Hajnal and myself: Is it true that if there are given n points in the plane, not all on a line, then they determine at least $n-2$ different angles? (0 and $\pi$ are counted as different but angles greater than $\pi$ are
not allowed)."}

The problem also appeared in a paper of Erd\H{o}s and Purdy \cite[p. 846]{ErdosPurdy}. The example of the regular $n$-gon shows that a positive answer to this question would be a sharp result. We use \Cref{thm:main} to give a positive answer to this question for all $n$ sufficiently large. In fact we even further prove that the \textit{only} configuration achieving $n-2$ angles is the regular $n$-gon.

\begin{theorem}\label{thm:classification}
    For $n$ sufficiently large, every non-collinear point set $P \subseteq \mathbb R^2$ of size $n$ determines at least $n-2$ distinct angles. Furthermore, the only point configuration of $n$ non-collinear points which achieves exactly $n-2$ distinct angles is the regular $n$-gon.
\end{theorem}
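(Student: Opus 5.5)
We would deduce \Cref{thm:classification} from \Cref{thm:main} by a case analysis. For $n$ large, either some $q\in P$ has $|\cA(P)|\ge|\cA_q(P)|\ge n^{1+c}>n-2$, and we are done, or $P$ has one of the forms (2)--(5); form (1) is excluded since $P$ is non-collinear. Note that $0$ and $\pi$ count as angles. The plan is to show directly that each exceptional form gives at least $n-2$ angles, with equality only for the regular $n$-gon, which is of form (4).

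\textbf{Forms (2) and (3).} Let $\ell$ contain $m\ge n-2$ points $x_1<\dots<x_m$ of $P$, and let $p\notin\ell$ be an exceptional point.
\begin{itemize}
\item The angles $\cA(x_1,p,x_j)$ for $j=2,\dots,m$ are strictly increasing in $j$ and lie in $(0,\pi)$. This gives $m-1$ distinct angles.
\item Collinearity also gives the angle $\pi=\cA(x_1,x_2,x_3)$, which is not in the previous list.
\item The angles at $x_1$ (or $x_m$) between $p$ and $\ell$ give further ones.
\end{itemize}
In form (2), $m=n-1$, so we already have $n-2+1=n-1>n-2$ angles. In form (3), $m=n-2$, so the first two items give $n-3+1=n-2$. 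To get strict inequality we need one more angle. The candidates are $\cA(p,x_1,x_2)$ and $\cA(p,x_m,x_{m-1})$, angles at the endpoints of $\ell$. One of these is acute and one is obtuse-or-acute, and they must both coincide with angles of the form $\cA(x_1,p,x_j)$. We also have the angles $\cA(p,x_i,p')$ and $\cA(p,x_i,x_j)$, which vary monotonically in $i$, so all of them cannot fit into the existing set of $n-2$ values. We will carefully exhibit one angle outside the list $\{\cA(x_1,p,x_j)\}\cup\{\pi\}$. For example, compare $\cA(x_2,p,x_m)<\cA(x_1,p,x_m)$ with the sequence: the angles $\cA(x_i,p,x_m)$ for $i=1,\dots,m-1$ form another increasing chain. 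Equality of the two chains as sets forces a rigid structure, which the extra angles at $x_1$ then contradict. This yields at least $n-1$ angles in forms (2) and (3).

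\textbf{Form (4): $P$ on a circle.} Every inscribed angle $\cA(p,q,r)$ equals half the arc $pr$ not containing $q$. Order the points cyclically as $p_1,\dots,p_n$ with arcs $a_1,\dots,a_n>0$ summing to $2\pi$. Fixing $p_1$ and $p_2$ and varying the third point over the remaining $n-2$ points, every arc from $p_1$ through $p_2$ up to $p_j$ (for $j=3,\dots,n$) is realised by taking the vertex $q$ on the complementary side. This gives the strictly increasing values $\tfrac12(a_1+\dots+a_{j-1})$, hence $n-2$ distinct angles, and so $|\cA(P)|\ge n-2$. For equality, we would show the set of angles is exactly $\{\tfrac12 S_k\}$, where $S_k$ ranges over the partial sums of consecutive arcs starting at $p_1$. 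Running the same argument from each starting point $p_i$ gives chains of length $n-2$ that must all equal one set $A=\{\alpha_1<\dots<\alpha_{n-2}\}$. Comparing the smallest elements gives $\min(a_i,a_{i+1})$ constant. Pushing this comparison through, taking the smallest element of each chain, then the second, and so on, forces all $a_i$ to be equal, i.e.\ the regular $n$-gon.

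\textbf{Form (5): circle plus centre $o$.} The central angles at $o$ include $\cA(p_1,o,p_j)$, and the inscribed angles are exactly half of these. From the $n-1$ circle points we already get $n-3$ inscribed angles. Including the angles at $o$ (the full arcs, in $(0,\pi]$) and the angles $\cA(o,p_i,p_j)=\tfrac{\pi}{2}-\tfrac{\theta}{2}$ gives at least two new values. For example, the largest central angle exceeds every inscribed angle unless it is at most half-size, which handles it. So this form also gives more than $n-2$ angles.

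\textbf{Main obstacle.} The main obstacle is the rigidity step in form (4): showing that equality forces equal arcs. I would first attempt it by induction on the smallest arcs, using the fact that each chain must reproduce all of $A$. If that fails, I would count the sums $\tfrac12(a_i+\dots+a_{i+k})$ for fixed $k$, which must all lie in a single value $\alpha_k$ by a monotonicity/pigeonhole argument. This gives $a_i=a_{i+k+1}$, and hence all arcs are equal.
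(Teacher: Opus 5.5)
\textbf{The gap is in form (5)} of \Cref{thm:main}, the circle together with its centre $o$. This is where the paper spends almost all of its effort, and your paragraph on it contains no actual argument.

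For the uniqueness statement you need $n-1$ angles here, and this bound is sharp. A regular $(n-1)$-gon with $n-1$ even, together with its centre, determines exactly the $n-1$ angles $\{k\pi/(n-1): 0\le k\le n-3\}\cup\{\pi\}$. So a loose count cannot work.

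Your sketch has two specific problems:
\begin{itemize}
\item The $n-1$ circle points may determine more than $n-3$ angles. The central angles and base angles $\tfrac{\pi}{2}-\tfrac{\theta}{2}$ would then have to be new relative to an unknown larger set.
\item The claim that the largest central angle exceeds every inscribed angle is false. If all circle points lie in an arc of length $L<2\pi/3$, every central angle is at most $L$. Yet three consecutive points $u,w,v$ give $\cA(u,w,v)\ge \pi-L/2>L$.
\end{itemize}
As written, you do not even obtain $n-2$ in this form.

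The missing idea is to pin at a circle point $p$ and include the centre among the directions. If two circle points are antipodal, you get $0$ and $\pi$ and are done. Otherwise the $n-1$ directions from $p$ are distinct and lie in an open half-plane. Hence $|\cA_p(P)|\ge n-2$, with equality only if these directions form an arithmetic progression with some step $\theta$.

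This equality case must then be ruled out geometrically, as the paper does:
\begin{itemize}
\item Take $p_m$ a circle point adjacent in direction to $o$. The isosceles triangle $p\,o\,p_m$ yields the angle $\pi-2\theta$, forcing $\pi\in\{(n-1)\theta,n\theta\}$.
\item The tangent--chord angles $\phi_1,\phi_2$ at $p$ then satisfy $\phi_1+\phi_2\in\{\theta,2\theta\}$.
\item Inscribed-angle considerations now produce an extra angle. This needs a right-angle/antipode argument for $n$ even and a reflection-parity argument for $n$ odd.
\end{itemize}

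\textbf{Smaller issues elsewhere.} Your overall route (reduce via \Cref{thm:main}, then treat the exceptional forms by hand) matches the paper's, and forms (2) and (4) are essentially fine.
\begin{itemize}
\item In form (3) you overlooked the angle $0=\cA(x_2,x_1,x_3)$, although you noted that $0$ counts. Together with $\pi$ and the $n-3$ angles $\cA(x_1,p,x_j)$, this gives $n-1$ at once. So the ``rigid structure'' argument, which you never actually carry out, is unnecessary.
\item In form (4) the value $\tfrac12(a_1+\dots+a_{n-1})$ is not realised, because the complementary arc $a_n$ contains no vertex. Use instead the vertex $p_n$ with endpoints $p_1,p_j$ for $2\le j\le n-1$.
\item What you call the main obstacle in form (4) is immediate. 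If $|\cA(P)|=n-2$, every such chain of $n-2$ distinct angles equals $\cA(P)$. The chain with vertex $p_{i-1}$ and endpoint $p_i$ has minimum $a_i/2$, so $a_i=2\min\cA(P)$ for every $i$.
\end{itemize}
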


\subsection{A sketch of the proof of \Cref{thm:main}}

Before beginning, we give a rough sketch of the proof, focusing mainly on the case when the point set is in general position, as most of the key ideas already occur here. Given a point set $P$ in general position, consider any fixed point $q \in P$, and suppose that $\mathcal A_q(P)$ is small. If we consider the set $P$ as complex numbers, the angles defined between a point of $P$, the point $q$, and the horizontal, can be encoded by the ratios $\frac{p-q}{\overline{p} - \overline{q}}$, for $p \in P$. Let us call the set of these ratios $D_q$. It then follows that the ratio set $D_q D_q^{-1}$ encodes the angles of $\mathcal A_q(P)$. Therefore this ratio set is small. We then apply the weak polynomial Freiman-Ruzsa theorem to this set, concluding that a large chunk of $D_q$ is contained in a low rank subgroup of $\mathbb C^*$.

 We repeat this with three points $p_1,p_2,p_3 \in P$, and suppose for a contradiction that all three of the points determine few pinned angles. After some technical work and making three applications of the Freiman-Ruzsa Theorem, we can place a large chunk of all three set $D_{p_1}, D_{p_2},$ and $D_{p_3}$ within the same low-rank subgroup $\Gamma$. On the other hand, each point $p \in P \setminus \{p_1,p_2,p_3\}$ yields a triple of ratios - one from each of the sets $D_{p_i}$. This gives us many solutions to a system of three equations involving elements of $\Gamma$, which can be reduced to a single linear equation with variables in $\Gamma$. We are now in a situation whereby the subspace theorem is applicable. We need to take care of possibly degenerate solutions to this equation, but it turns out that the general position assumption can be used to handle these cases. We arrive at a contradiction - the linear equation has many solutions, contradicting the subspace theorem - and so at least one of the three fixed pins $p_1,p_2,p_3$ must determine many angles as a centre with the remaining points of $P$.
 
 In order to obtain the more general form of Theorem \ref{thm:main}, we work with the weaker assumption that no $T$ points of $P$ lie on a line or circle for some $T \leq n^{9/10}$. The proof is broadly the same, although we need to take more care in choosing the three initial points so that they satisfy an incidence structure which allows the technical work in the Freiman-Ruzsa applications to not be too wasteful. This is handled via pigeonholing and an application of Beck's Theorem. The factor of $T$ causes a quantitative loss when it comes to the subspace theorem application, but this can be controlled as long as $T$ is significantly smaller than $n$. Finally, for the case when $P$ contains super rich lines or circles (i.e. when $T > n^{9/10}$), we can use a sum-product type result of Elekes, Nathanson and Ruzsa \cite{ENR} to prove that $P$ determines many pinned angles. This step crucially requires a small number of points off the rich line or circle. 

\section{Some pre-processing via Beck's Theorem} \label{sec:beck}

In this section we apply some pre-processing to our point set, which is designed to find three distinct points $p_1,p_2,p_3 \in P$ and a large subset $P' \subseteq P$  such that all of the lines defined between one of $p_1,p_2,p_3$ and the set $P'$ have few points of $P$ on them. To do this we use a variation of Beck's Theorem. We need slightly more information than is given by the classical formulation of the statement, and the following version of Beck's Theorem can be derived from Theorem 6 in Solymosi, Tardos and T\'{o}th \cite{STT}.
\begin{theorem}\label{thm:becks}
    Let $P$ be a set of $n$ points in $\mathbb R^2$. There exist positive absolute constants $\alpha$, $\beta$, and $\gamma$ such that at least one of the following is true.
  \begin{enumerate}
      \item There is a line containing at least $\alpha n$ points of $P$.
      \item There exists a set of at least $\beta n^2$ pairs of distinct points of $P$, such that each pair defines a line containing at most $\gamma$ points of $P$.
  \end{enumerate}
\end{theorem}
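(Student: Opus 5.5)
We plan to derive \Cref{thm:becks} from the Szemerédi--Trotter theorem in the form of its standard consequence: for $k \geq 2$, the number of lines containing at least $k$ points of $P$ is $O(n^2/k^3 + n/k)$. This is essentially the content of Theorem 6 of \cite{STT}, and Beck's original argument follows the same lines. We will count pairs of distinct points according to the richness of the line they span, and split the lines into three classes: poor ($|\ell\cap P|\le \gamma$), intermediate ($\gamma < |\ell \cap P| < \alpha n$), and rich ($|\ell \cap P| \geq \alpha n$). If some line is rich, conclusion (1) holds, so we will assume that no line is rich and aim to show that the poor lines account for at least $\beta n^2$ pairs.

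The central step is to bound the number of pairs lying on intermediate lines by a dyadic decomposition. For $2^j \le k < 2^{j+1}$, the number of lines containing between $k$ and $2k$ points is at most $C(n^2/k^3 + n/k)$, and each such line contributes at most $4k^2$ pairs. Hence this range contributes at most $4C(n^2/k + nk)$ pairs. Summing over the dyadic scales with $\gamma \le k \le \alpha n$, the first term gives a geometric series bounded by $8Cn^2/\gamma$. The second term gives a geometric series bounded by $8C n \cdot \alpha n = 8C\alpha n^2$.

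It remains to fix the constants. We choose $\gamma$ large enough that $8C/\gamma \le 1/8$, and $\alpha$ small enough that $8C\alpha \le 1/8$. Then the intermediate lines contain at most $n^2/4$ pairs. The total number of ordered pairs of distinct points is $n(n-1) \ge n^2/2$ for $n\ge 2$, so in the absence of rich lines at least $n^2/4$ pairs span a line containing at most $\gamma$ points. This gives conclusion (2) with $\beta=1/4$; small $n$ can be absorbed by shrinking $\beta$, or are trivial since every pair then lies on a line with at most $n$ points.

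The main point requiring care is the Szemerédi--Trotter rich-lines bound, which we will simply cite rather than reprove. Beyond that, the only subtlety is ordering the constant choices: $\gamma$ must be chosen independently of $\alpha$, since the $n^2/k$ term is controlled at the small end of the range and the $nk$ term at the large end. Once the split is made this way, no circularity arises.
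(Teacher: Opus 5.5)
Your proof is correct. The paper does not prove this statement itself; it only remarks that it can be derived from Theorem 6 of Solymosi--Tardos--T\'oth.

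Your dyadic counting of ordered pairs against the Szemer\'edi--Trotter bound $O(n^2/k^3+n/k)$ for $k$-rich lines is the standard derivation behind such Beck-type statements. You decouple the constants correctly: $\gamma$ is taken large to absorb $\sum n^2/k$, and $\alpha$ small to absorb $\sum nk$. The final count $n(n-1)-n^2/4\ge n^2/4$ holds for all $n\ge 2$.

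The only slip is cosmetic. With blocks $[2^j,2^{j+1})$, the first block meeting $(\gamma,\alpha n)$ can begin near $\gamma/2$. That geometric sum is therefore at most $16Cn^2/\gamma$ rather than $8Cn^2/\gamma$; alternatively, use blocks $[2^j\gamma,2^{j+1}\gamma)$. Either way this only changes the numerical choice of $\gamma$.

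Your remark about small $n$ is unnecessary. The argument already works for every $n\ge 2$, and for $n=1$ alternative (1) holds trivially once $\alpha\le 1$.
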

We will prove the following lemma.
\begin{lemma}\label{lem:preprocessing} Let $\alpha,\beta$ and $\gamma$ be the constants obtained from \Cref{thm:becks}. Suppose that $n$ is sufficiently large and let $P$ be a set of $n$ points in the plane with no line containing $\alpha n$ points of $P$. Then there exist three distinct points $p_1,p_2,p_3 \in P$, and a subset $Q \subseteq P \setminus \{p_1,p_2,p_3\}$ of size $|Q| \geq 4 \beta^3 n$ such that for any $i=1,2,3$, any line through the point $p_i$ contains at most $\gamma$ points of $P$.
\end{lemma}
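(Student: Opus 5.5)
We read the conclusion as: for every $q\in Q$ and every $i=1,2,3$, the line through $p_i$ and $q$ contains at most $\gamma$ points of $P$. The plan is a counting argument on the graph of ``good'' pairs supplied by \Cref{thm:becks}.

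Since no line contains $\alpha n$ points, \Cref{thm:becks} gives a set $E$ of at least $\beta n^2$ unordered pairs $\{p,q\}$ of distinct points of $P$ such that the line $pq$ contains at most $\gamma$ points of $P$. Regard $E$ as the edge set of a graph $G$ on $P$. For $p\in P$ write $N(p)$ for its neighbourhood in $G$. Then
\[
\sum_{p} |N(p)| = 2|E| \geq 2\beta n^2 .
\]

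We want three points whose neighbourhoods share a large common part. Count ordered triples $(p_1,p_2,p_3)$ of points of $P$ together with a common neighbour $q$. The number of such configurations, allowing repetitions among the $p_i$, is
\[
\sum_{q\in P} |N(q)|^3 \geq n\Big(\frac{1}{n}\sum_q |N(q)|\Big)^3 \geq n(2\beta n)^3 = 8\beta^3 n^4
\]
by Hölder (convexity of $x\mapsto x^3$). The triples with $p_i=p_j$ for some $i\neq j$ number at most $3n^2\cdot n$ choices of $(p_1,p_2,p_3)$ times $n$ choices of $q$, which is $O(n^4)$ with constant $3$. That is too crude, so instead bound them by $3\sum_q |N(q)|^2 \le 3n^3$. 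This is $o(n^4)$, hence negligible for $n$ large.

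Therefore the number of pairs (ordered triple of distinct points, common neighbour $q$) is at least $8\beta^3n^4-3n^3\geq 4.5\beta^3 n^4$ for $n$ large. Since there are at most $n^3$ ordered triples of distinct points, pigeonholing gives distinct $p_1,p_2,p_3$ with
\[
|N(p_1)\cap N(p_2)\cap N(p_3)| \geq 4.5\beta^3 n .
\]
Set $Q = N(p_1)\cap N(p_2)\cap N(p_3)\setminus\{p_1,p_2,p_3\}$. The subtraction costs at most three points, since adjacency excludes $q=p_i$ anyway, so $|Q|\ge 4\beta^3 n$ for $n$ large. By definition of $E$, each line $p_iq$ with $q\in Q$ contains at most $\gamma$ points of $P$.

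The only delicate point is bookkeeping the degenerate triples and the constant factor $2$ from $2|E|$, which is exactly what supplies the constant $4\beta^3$ with room to spare. No genuine obstacle is expected.
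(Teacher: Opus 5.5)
Your proof is correct and is essentially the paper's argument: Hölder (convexity) applied to the degrees of the Beck graph gives $\sum_q |N(q)|^3 \ge 8\beta^3 n^4$, degenerate triples are discarded, and pigeonholing over ordered triples of distinct points produces three points with a common neighbourhood of size $\gg \beta^3 n$. Your bound $3\sum_q |N(q)|^2 \le 3n^3$ simply makes explicit the paper's remark that, for large $n$, at least half the contribution comes from distinct triples. (Two small slips do not affect the argument: your ``crude'' count is miscomputed, since there are at most $3n^2$ degenerate ordered triples and hence at most $3n^3$ configurations, the same as your refined bound; and the common neighbourhood already excludes $p_1,p_2,p_3$, so removing them costs nothing.)
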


\begin{proof}

Since no line contains $\alpha n$ points of $P$, \Cref{thm:becks} implies that there is a set $\mathcal{H} \subseteq P^2$ of distinct pairs, such that  $|\mathcal{H}| \geq \beta n^2$, and each pair from $\mathcal{H}$ defines a line containing at most $\gamma$ points of $P$. We now define a graph $G = (V,E)$ in the following way - the vertex set $V$ is the point set $P$, and two vertices $p_1$ and $p_2$ form an edge if $(p_1,p_2) \in \mathcal{H}$. Applying H\"{o}lder's inequality, we have
\begin{align*}
2\beta n^2& \leq 2|E| = \sum_{p \in P}|\{p_1  \in P: (p,p_1) \in E\}|\\ & \implies 8\beta^3 n^6 \leq n^2 \sum_{p \in P} |\{(p_1,p_2,p_3) \in P^3 : (p,p_1),(p,p_2),(p,p_3) \in E\}|  \\
& \ \quad \qquad \qquad=n^2 \sum_{(p_1,p_2,p_3) \in P} |\{p \in P : (p,p_1),(p,p_2),(p,p_3) \in E\}|
\end{align*}
Taking $n$ sufficiently large ensures that at least half of the contributions to this latter sum come from distinct triples.
Therefore, pigeonholing implies that there exists a triple of distinct points $(p_1,p_2,p_3) \in P^3$ such that
$$|\{p \in P : (p,p_1),(p,p_2),(p,p_3) \in E\}| \geq 4\beta^3 n.$$
Taking $Q$ to be this set of points finishes the proof of \Cref{lem:preprocessing}.

\end{proof}

\section{Finding a good subgroup via the weak polynomial Freiman-Ruzsa theorem} \label{sec:freiman}

From here on we will typically consider our point set $P \subseteq \mathbb R^2$ as a set of complex numbers via $\mathbb R^2 \cong \mathbb C$. We define, for any $p \in \mathbb R^2 \cong \mathbb C$ and any $Q \subset \mathbb C \setminus \{p\}$,
\[
D_p(Q):= \left\{ \frac{z-p}{\overline{z}-\overline{p}} : z \in Q \right\} \subset \mathbb C.
\]
Note that for each $z \in \mathbb C \setminus \{p\}$, the corresponding $\frac{z-p}{\overline{z} - \overline{p}}$ is a complex number of absolute value one, whose argument is twice the oriented angle that the line segment from $p$ to $z$ makes with the positive horizontal axis. 
\begin{figure}[h!]
\begin{center}
\begin{minipage}{0.4 \linewidth}
    \centering
    \begin{tikzpicture}[scale=1]
\coordinate (x) at (3,1.7);
\coordinate (O) at (0,0);
\coordinate (p) at (-1,-2);
\coordinate (z) at (0.5,-0.5);
\coordinate (line) at (3,-2);
    \draw[->] (-3,0) -- (3,0) node[right] {$\Re$};
    \draw[dashed] (p) -- (3,-2);
    \draw[->] (0,-3) -- (0,3) node[above] {$\Im$};
        \draw (z) -- (p);
    \filldraw[black] (p) circle (1pt) node[above] {$p$};
    \filldraw[black] (z) circle (1pt) node[above] {$z$};
    \pic[
    draw,
    "$\theta$",
    angle radius=0.5cm,
    angle eccentricity=1.65
] {angle=line--p--z};
\end{tikzpicture} 
\end{minipage} \hspace{5mm}
\begin{minipage}{0.4 \linewidth}
    \centering
    \begin{tikzpicture}[scale=1]
\coordinate (x) at (2,0);
\coordinate (O) at (0,0);
\coordinate (p) at (-0.6,0.8);
\coordinate (p') at (1.5,1.7);
    \draw (0,0) circle (1);
    \draw[->] (-3,0) -- (3,0) node[right] {$\Re$};
    \draw[->] (0,-3) -- (0,3) node[above] {$\Im$};
        \draw (0,0) -- (p);
    \filldraw[black] (p) circle (1pt) node[above] {$\frac{z-p}{\overline{z} - \overline{p}}$};
    \pic[
    draw,
    "$2\theta$",
    angle radius=0.5cm,
    angle eccentricity=1.55
] {angle=x--O--p};
\end{tikzpicture}
\end{minipage}
    \caption{The angle that $p$ makes with $z$ and the horizontal is half the argument of {$\frac{z-p}{\overline{z} - \overline{p}}$}}
    \label{fig:placeholder}
\end{center}
\end{figure}

\begin{lemma} Let $p \in \mathbb R^2 \cong \mathbb C$ and let $Q \subseteq \mathbb C \setminus \{p\}$. Suppose that $z_1$ and $z_2$ are distinct elements of $Q$ such that $(z_1,p,z_2)$ is  not a collinear triple and let $\phi_1,\phi_2$ be the internal and external angles defined by the triple $(z_1,p,z_2)$. Let $x = \frac{z_1-p}{\overline{z_1} - \overline{p}}$ and $y = \frac{z_2-p}{\overline{z_2} - \overline{p}}$ be the two corresponding elements of $D_p(Q)$. Then we have $\{ x/y, y/x \} = \{e^{i2\phi_1}, e^{i2\phi_2}\}$ - that is, these two ratios encode the internal and external angles. In particular,
\begin{equation*} \label{dirangles}
|\cA_p(Q)| \geq \frac{|D_p(Q) / D_p(Q)| -1}{2}.
\end{equation*}
\end{lemma}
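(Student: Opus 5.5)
Write $z_j - p = r_j e^{i\theta_j}$ with $r_j>0$ and $\theta_j\in[0,2\pi)$ for $j=1,2$. Since $\overline{z_j}-\overline{p} = r_j e^{-i\theta_j}$, we get $x = e^{2i\theta_1}$ and $y=e^{2i\theta_2}$. Hence
\[
x/y = e^{2i(\theta_1-\theta_2)}, \qquad y/x = e^{-2i(\theta_1-\theta_2)}.
\]
The plan is to relate $\theta_1-\theta_2$ to the internal angle $\phi_1=\cA(z_1,p,z_2)\in(0,\pi)$ (the triple is non-collinear) and to the external angle $\phi_2 = 2\pi-\phi_1$ (or $\pi - \phi_1$, depending on convention; we check both).

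Reduce $\theta_1-\theta_2$ modulo $2\pi$ into $(-\pi,\pi]$. Its absolute value is exactly the unoriented angle $\phi_1$ between the rays $pz_1$ and $pz_2$, so $\theta_1-\theta_2 \equiv \pm\phi_1 \pmod{2\pi}$. Therefore
\[
\{x/y,\,y/x\} = \{e^{2i\phi_1}, e^{-2i\phi_1}\}.
\]
Now $e^{-2i\phi_1} = e^{2i(2\pi-\phi_1)}$, and this equals $e^{2i(\pi-\phi_1)}$ as well, since the factor $e^{2\pi i}$ is trivial. So with either convention for the external angle $\phi_2$ we have $e^{-2i\phi_1}=e^{2i\phi_2}$, which gives the claimed set equality.

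For the inequality, consider the map $D_p(Q)/D_p(Q)\to\cA_p(Q)\cup\{0\}$ that sends a ratio $x/y$ to the internal angle $\cA(z_1,p,z_2)$, choosing any representatives $z_1,z_2$ with the given values, and sends $x/y=1$ to $0$.

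\begin{itemize}
\item \emph{The map is well defined.} The ratio determines $e^{\pm 2i\phi_1}$, and by the first part this pins down $\phi_1$ up to the replacement $\phi_1\mapsto\pi-\phi_1$.
\item \emph{The map is at most two-to-one.} It is more robust to argue in the other direction: each angle $\phi\in\cA_p(Q)$ arises from some ratio, and the ratios that yield $\phi$ lie in $\{e^{2i\phi},e^{-2i\phi}\}$. So every ratio other than $1$ lies in $\bigcup_{\phi\in\cA_p(Q)}\{e^{\pm2i\phi}\}$.
\item \emph{The degenerate cases fit this count.} If $x/y\ne1$ then $\theta_1\ne\theta_2 \pmod \pi$, so $z_1,p,z_2$ are non-collinear with $z_1\ne z_2$, and the ratio is of the form $e^{\pm2i\phi}$ with $\phi=\cA(z_1,p,z_2)\in\cA_p(Q)$. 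The value $1$ arises from $x=y$, which covers the collinear case.
\end{itemize}

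It follows that $|D_p(Q)/D_p(Q)|\le 1+2|\cA_p(Q)|$, which rearranges to the stated bound.

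The only delicate point is the bookkeeping of which angles correspond to which ratios: the internal versus external angle, the factor of $2$ that kills the difference between $\phi$ and $\phi+\pi$, and the collinear pairs that give ratio $1$. Everything else is a direct computation.
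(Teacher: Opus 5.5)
Your proof is correct and follows essentially the same route as the paper: write $x=e^{2i\theta_1}$, $y=e^{2i\theta_2}$, identify $\theta_1-\theta_2$ with $\pm\phi_1$ modulo $2\pi$ (so the two ratios are $e^{\pm 2i\phi_1}$), and then compare ratios with angles. Your covering step is all the count needs, namely that every ratio other than $1$ equals $e^{\pm 2i\phi}$ for some $\phi\in\cA_p(Q)$. It is actually cleaner than the paper's stated identity $|D_p(Q)/D_p(Q)|=2|\cA_p(Q)^*|+1$, which only holds as an inequality, since $\phi$ and $\pi-\phi$ (or $\phi=\pi/2$) produce coinciding ratios. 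Your ``well defined'' bullet contributes nothing and can be dropped.
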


\begin{proof}
Consider the two points $z_1,z_2 \in Q$. Suppose that these points make angles $\theta_1, \theta_2$ with the horizontal. Suppose WLOG that $\theta_2 \geq \theta_1$, in which case either the internal or external angle of the triple $(z_1,p,z_2)$ is given by $\theta_2 - \theta_1$. Note that both complex numbers $x = e^{i2\theta_1}$ and $y = e^{i2\theta_2}$ appear within $D_p(Q)$. Therefore within the ratio set $D_{p}(P) D_{p}(P)^{-1}$ we find the complex number $y/x = e^{i2(\theta_2 - \theta_1)}$, which encodes this (either internal or external) angle. The complex conjugate of this number, namely $e^{i2(\theta_1 - \theta_2)}$, also appear in the ratio set. Now, it is clear that if $\theta_2 - \theta_1$ is the internal (resp. external) angle, then $2\pi + \theta_1 - \theta_2$ is the external (resp. internal) angle, simply because they sum to $2\pi$. However we then see that since $x/y = e^{i2(\theta_1 - \theta_2)} = e^{i2(2\pi + \theta_1 - \theta_2)}$, we have that two times this second angle also appears in the ratio set.  This proves the first part of the lemma.

Let $\cA_p(Q)^* = \cA_p(Q) \setminus \{0, \pi \}$ (i.e. those angles pinned at $p$ arising from non-collinear triples) and let $\overline{\cA_p(Q)^*}$  denote the set of exterior angles determined by non-collinear triples. Note also that $|\cA_p(Q)^*|=|\overline{\cA_p(Q)^*}|$. The first part of the statement of the lemma then implies that 
\[
|D_p(Q)/D_p(Q)|=|\cA_p(Q)^*| + |\overline{\cA_p(Q)^*}| +1=2|\cA_p(Q)^*|+1 \leq 2|\cA_p(Q)|+1,
\]
which completes the proof.

\end{proof}


With this in mind, if we have a set $P$ determining few distinct angles centred at each point $p \in P$, we must be in the situation where each of the sets $D_p(P)$ has multiplicative structure in $\mathbb C$. To make use of this information, we will apply some of the latest technology with regard to the Freiman-Ruzsa Theorems, giving precise quantitative and qualitative information about sets which determine few products. The following result of Harrison, Mudgal and Schmidt \cite{HMS} will be used.


\begin{theorem}[\cite{HMS}, Lemma 6.2] \label{thm:weakPFR}
    Let $A \subset \mathbb C^*$ such that $|AA| \leq K|A|$. Then there exists a multiplicative subgroup $\Gamma$, an absolute constant $C>0$, and a subset $A' \subset A$ such that
    \[
    |A'| \gg |A|/K^C, \,\,\,\, \rank (\Gamma) \ll \log K, \,\,\,\,\, A' \subset \Gamma.
    \]
\end{theorem}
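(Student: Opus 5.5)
This is the cited Lemma 6.2 of \cite{HMS}, and the natural proof reduces it to the polynomial Freiman--Ruzsa theorem in torsion-free groups, in its convex-progression form due to Gowers, Green, Manners and Tao. That form says: if $B$ lies in a torsion-free abelian group and $|B+B|\le K|B|$, then $B$ is covered by $K^{O(1)}$ translates of a (symmetric, convex) generalized progression of dimension $O(\log K)$. The plan is to move from $\mathbb C^*$ to such a group, apply this result, and pull the structure back.

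\textbf{Step 1: reduce to a finitely generated group.} Since $A$ is finite, $G=\langle A\rangle\le \mathbb C^*$ is a finitely generated abelian group. Its torsion subgroup $T$ is a finite group of roots of unity, hence cyclic. So $G\cong T\times \mathbb Z^r$ for some $r$, and I fix a projection $\varphi:G\to\mathbb Z^r$ with kernel $T$. The rank $r$ may be huge, but this does not matter, because the torsion-free PFR bound does not depend on the ambient dimension.

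\textbf{Step 2: control the fibres of $\varphi$.} For $x\in\varphi(A)$ let $A_x=A\cap\varphi^{-1}(x)$. I first dyadically pigeonhole a level $m$ and a set $X\subseteq\varphi(A)$ with $m\le |A_x|<2m$ for all $x\in X$, chosen so that $|\bigcup_{x\in X}A_x|\gg |A|/\log|A|$. If that logarithmic loss is unacceptable, I would instead use a popularity argument to get a loss that is polynomial in $K$.

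Next I bound the doubling of $X$. For each $s\in X+X$, the fibre of $AA$ over $s$ contains a translate $aA_y$ of some fibre of size at least $m$, so it has at least $m$ elements. Hence
\[
|X+X|\,m\le |AA|\le K|A|\ll K\,m|X|\log|A|,
\]
which gives small doubling of $X$, again up to the loss from pigeonholing. I expect this fibre bookkeeping to be the main obstacle: the losses must stay polynomial in $K$ rather than involve $\log|A|$. To achieve this I would first pass to a large subset of $A$ with small tripling via the Pl\"unnecke--Ruzsa inequality and Ruzsa covering, and then pigeonhole fibres only among those that are popular relative to $K$. Alternatively, I would apply PFR directly in $G$, which has torsion, using the fact that $T$ is cyclic: a cyclic torsion part only adds rank $1$.

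\textbf{Step 3: apply torsion-free PFR and pull back.} Applying the torsion-free PFR theorem to $X\subset\mathbb Z^r$ covers $X$ by $K^{O(1)}$ translates $x_j+\mathcal P$, where $\mathcal P$ has dimension $d\ll\log K$ and generators $v_1,\dots,v_d$. By pigeonholing, one translate $x_0+\mathcal P$ contains $\gg |X|/K^{O(1)}$ elements of $X$. I then set $A'$ to be the union of the fibres $A_x$ over these $x$, so that $|A'|\gg |A|/K^{C}$. I choose lifts $g_0,g_1,\dots,g_d\in G$ of $x_0,v_1,\dots,v_d$, together with a generator $\zeta$ of $T$, and let
\[
\Gamma=\langle \zeta,g_0,g_1,\dots,g_d\rangle .
\]
Every $a\in A'$ satisfies $\varphi(a)\in x_0+\mathcal P$, so $a$ lies in $g_0\langle g_1,\dots,g_d\rangle T\subseteq\Gamma$. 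Finally, $\rank(\Gamma)\le d+2\ll\log K$ (for $K\ge 2$, with $K$ bounded below handled trivially), as required.
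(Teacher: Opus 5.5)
\textbf{There is a genuine gap in Step~2, and the torsion-free theorem you invoke is misstated.} Your overall route is the natural one: quotient $G=\langle A\rangle$ by its cyclic torsion $T$, apply a torsion-free structure theorem in $\mathbb Z^r$, and lift generators. (The paper gives no proof and imports the statement from \cite{HMS}.) But as written the argument does not establish the statement.

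\textbf{Step 2.} You flag this step yourself and then leave it unresolved. Dyadic pigeonholing only gives $|X+X|\ll K\log|A|\,|X|$. Step~3 then yields $|A'|\gg |A|/(K\log|A|)^{O(1)}$ and $\rank(\Gamma)\ll\log K+\log\log|A|$, which are not the claimed bounds. (Such a loss would be harmless in the proof of \Cref{thm:mainnearly}, but it is not the statement.) Your remedies do not close this. The Pl\"unnecke--Ruzsa/covering route is only sketched. ``Apply PFR directly in $G$'' fails: $G$ is neither torsion-free nor of bounded exponent, and the Gowers--Green--Manners--Tao bounds for groups of exponent $m$ grow with $m$, which here would be $|T|$ and is unbounded.

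The missing idea is a popularity argument built on the \emph{largest} fibre:
\begin{itemize}
\item Let $M=\max_x|A_x|$, attained at $x_0$. For each $y\in\varphi(A)$ and $b\in A_y$, the set $bA_{x_0}$ lies in the fibre of $AA$ over $x_0+y$. These fibres are pairwise disjoint, so $|\varphi(A)|\,M\le|AA|\le K|A|$.
\item Hence the fibres with $|A_x|<M/(2K)$ contain fewer than $|A|/2$ points. Let $X=\{x:|A_x|\ge M/(2K)\}$; then $|X|M\ge|A|/2$.
\item Your own fibre count, now with threshold $M/(2K)$, gives $|X+X|\,M/(2K)\le K|A|\le 2K|X|M$, that is, $|X+X|\le 4K^2|X|$.
\item Every fibre over $X$ has size between $M/(2K)$ and $M$. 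So any $X'\subseteq X$ with $|X'|\ge K^{-O(1)}|X|$ gives $|A\cap\varphi^{-1}(X')|\ge K^{-O(1)}|A|$.
\end{itemize}

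\textbf{The torsion-free input.} Covering by $K^{O(1)}$ translates of a convex progression of dimension $O(\log K)$ is the polynomial Freiman--Ruzsa conjecture over $\mathbb Z$, which is still open. What Gowers, Green, Manners and Tao proved, and what the paper calls the weak polynomial Freiman--Ruzsa theorem, is weaker. If $X\subset\mathbb Z^r$ has $|X+X|\le K|X|$, then some $X'\subseteq X$ with $|X'|\ge K^{-O(1)}|X|$ lies in an affine subspace $x_0+V$ of $\mathbb R^r$ with $\dim V=O(\log K)$, uniformly in $r$.

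This is enough for Step~3. Take $x_0\in X'$. Then $X'-x_0$ lies in $V\cap\mathbb Z^r$, a free abelian group of rank at most $\dim V$. Lifting its generators, together with $x_0$ and a generator of $T$, produces $\Gamma$ exactly as you describe.

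With these two repairs your argument proves the statement for $K\ge 2$. The case of $K$ near $1$ cannot be ``handled trivially'': for $K=1$ and $A=\{a\}$ with $a$ not a root of unity, the bound $\rank(\Gamma)\ll\log K$ fails as written.
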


We note that \Cref{thm:weakPFR} uses the breakthrough results of Gowers, Green, Manners and Tao \cite{GGMT} in resolving the weak polynomial Freiman-Ruzsa conjecture over $\mathbb Z$. This quantitative breakthrough is crucial to our analysis in this paper. If we were to run through the argument of this paper using the previous best known Freiman-Ruzsa type bounds, we would instead recover the weaker lower bound $|\cA_p(P)| \geq n \exp ( \log^c n)$ in the statement of \Cref{thm:main}.

The main result of this section is the following.

\begin{lemma} \label{lem:subspacetrap}
Let $n \in \mathbb N$ be sufficiently large and let $T \leq \alpha n$, where $\alpha$ is the absolute constant from the statement of \Cref{thm:becks}. Let $P \subset \mathbb R^2$ be a set with cardinality $n$ with no line containing more than $T$ points and such that $|\cA_p(P)| \leq Ln$ for all $p \in P$. Then there is an absolute constant $C'$, three distinct points $a_1,a_2,a_3 \in P$, a multiplicative subgroup $\Gamma \leq \mathbb C^*$ with $\rank(\Gamma) \ll \log L$, and a subset $P' \subset P \setminus \{a_1,a_2,a_3\}$ such that $|P'| \gg n/L^{C'}$ and
\begin{equation} \label{containment}
D_{a_i}(P') \subseteq \Gamma , \,\,\,\, \forall \,\, 1 \leq i \leq 3.
\end{equation}

\end{lemma}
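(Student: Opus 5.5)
The plan is to combine \Cref{lem:preprocessing} with three successive applications of \Cref{thm:weakPFR}. Each application passes to a subset of points. Since $T \leq \alpha n$, no line contains $\alpha n$ points of $P$, so \Cref{lem:preprocessing} supplies distinct $a_1,a_2,a_3 \in P$ and $Q \subseteq P\setminus\{a_1,a_2,a_3\}$ with $|Q| \geq 4\beta^3 n$. Every line through $a_i$ and a point of $Q$ contains at most $\gamma$ points of $P$.

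\textbf{Key observation.} For $z,z' \neq p$ we have $\frac{z-p}{\overline z-\overline p}=\frac{z'-p}{\overline{z'}-\overline p}$ if and only if $z,z'$ lie on the same line through $p$.
\begin{itemize}
\item The map $z \mapsto \frac{z-a_i}{\overline z-\overline{a_i}}$ is therefore at most $\gamma$-to-one on $Q$ and on every subset of $Q$.
\item Hence $|D_{a_i}(Q_0)| \geq |Q_0|/\gamma$ for every $Q_0 \subseteq Q$.
\item Moreover, every element of a fibre $D_{a_i}(\cdot)$-preimage lies in $Q_0$, so pulling back a subset $A' \subseteq D_{a_i}(Q_0)$ gives a set of points of size at least $|A'|$.
\end{itemize}

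\textbf{Step 1.} Set $A_1 = D_{a_1}(Q)$. By the preceding lemma and the hypothesis,
\[
|A_1/A_1| \leq |D_{a_1}(P)/D_{a_1}(P)| \leq 2|\cA_{a_1}(P)|+1 \leq 3Ln.
\]
To apply \Cref{thm:weakPFR} we need control of $|A_1A_1|$ rather than the ratio set. Since $A_1$ lies on the unit circle, $A_1^{-1}=\overline{A_1}$. Alternatively, the Ruzsa triangle inequality gives $|A||AA| \leq |A/A|^2$, hence $|A_1A_1| \leq |A_1/A_1|^2/|A_1|$. As $|A_1| \geq 4\beta^3 n/\gamma \gg n$, this yields $|A_1A_1| \leq K_1|A_1|$ with $K_1 \ll L^2$. \Cref{thm:weakPFR} then gives a subgroup $\Gamma_1$ with $\rank(\Gamma_1) \ll \log L$ and $A_1' \subseteq A_1 \cap \Gamma_1$ with $|A_1'| \gg n/L^{2C}$. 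Let $Q_1 \subseteq Q$ be the preimage of $A_1'$. Then $D_{a_1}(Q_1) \subseteq \Gamma_1$ and $|Q_1| \gg n/L^{2C}$.

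\textbf{Steps 2 and 3.} Repeat with $A_2 = D_{a_2}(Q_1)$. We have $|A_2| \geq |Q_1|/\gamma \gg n/L^{2C}$, while $|A_2/A_2| \leq 3Ln$. The doubling constant is therefore $K_2 \ll (Ln)^2/|A_2|^2 \ll L^{O(1)}$, which is still polynomial in $L$. This gives $\Gamma_2$ of rank $\ll \log L$ and $Q_2 \subseteq Q_1$ with $|Q_2| \gg n/L^{O(1)}$ and $D_{a_2}(Q_2)\subseteq\Gamma_2$. Do the same for $a_3$ to obtain $Q_3 \subseteq Q_2$ and $\Gamma_3$.

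\textbf{Conclusion.} Set $P'=Q_3$ and $\Gamma=\Gamma_1\Gamma_2\Gamma_3$. Then $\rank(\Gamma) \leq \sum_i \rank(\Gamma_i) \ll \log L$. Since $P' \subseteq Q_i$ for each $i$, we have $D_{a_i}(P') \subseteq \Gamma_i \subseteq \Gamma$, which is \eqref{containment}. Finally $|P'| \gg n/L^{C'}$ for an absolute $C'$, because the exponent grows only by a bounded factor at each of the three steps.

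\textbf{Main obstacle.} The delicate point is keeping every loss polynomial in $L$. If $D_{a_i}(Q_{i-1})$ could collapse, for instance if many points of $Q_{i-1}$ lay on a single line through $a_i$, then $|A_i|$ would be small relative to $n$. The doubling constant would then be large, and the pull-back from $A_i'$ to points would also be wasteful. This is exactly why the bounded-$\gamma$ line property from \Cref{lem:preprocessing} is needed, and it must hold for all three pins at once on the same set $Q$. A secondary technical point is converting the ratio-set bound into a product-set bound, which Ruzsa's triangle inequality handles.
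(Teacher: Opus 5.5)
Your proposal is correct and follows the paper's proof essentially step for step. Both use \Cref{lem:preprocessing} to get $a_1,a_2,a_3$ and $Q$ with the bounded-$\gamma$ line property, then three successive applications of \Cref{thm:weakPFR}, each passing to a subset of points, and finally take $\Gamma=\Gamma_1\Gamma_2\Gamma_3$. The only differences are cosmetic: you keep whole fibres rather than one representative point per direction, and you apply the bound $|A||AA|\le|A/A|^2$ to each $A_i$ directly rather than once to $D_{a_i}(P)$ followed by monotonicity. The latter costs a slightly worse power of $L$, which is still polynomial and so harmless.
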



\begin{proof}

Apply \Cref{lem:preprocessing} to obtain $a_1,a_2,a_3$ and $Q \subset P$ such that $|Q| \gg n$ and for every $1 \leq i \leq 3$ and every $q \in Q$, the line connecting $a_i$ and $q$ contains at most $\gamma$ points, where $\gamma$ is an absolute constant. Write $D_{a_i}$ as a shorthand for $D_{a_i}(Q)$. 

Furthermore, since each line through $a_i$ can only contain $\gamma$ points of $Q$, we have that $$|D_{a_i}| \gg n.$$
Therefore by our assumption on the number of angles defined from any point of $P$, we have that for $1 \leq i \leq 3$, the set $D_{a_i}(P)$ satisfies
\[
|D_{a_i}(P)D_{a_i}(P)^{-1}| \ll  |\cA_{a_i}(P)| \leq Ln.
\]
By the Ruzsa Triangle inequality 
\[|D_{a_1}D_{a_1}| \leq |D_{a_1}(P)D_{a_1}(P)|  \leq L^2n  \ll L^2 |D_{a_1}|.
\]
Apply \Cref{thm:weakPFR} to get a large subset $E_1 \subseteq D_{a_1}$ contained in a low rank subgroup $\Gamma_1$. Now choose a point set $P_1 \subseteq Q$ with one point representing each direction in $E_1$. That is, we identify a set $P_1$ such that
\[
E_1=D_{a_1}(P_1),
\]
by simply choosing, for each element $d \in E_1$, one point from $Q$ on the line through $a_1$ corresponding to the direction $d$. We have
\[
|P_1|=|E_1| \gg |D_{a_1}| / L^{2C} \gg n / L^{2C}, \,\,\, \rank(\Gamma_1)\ll  \log L,
\]
and $E_1 \subseteq \Gamma_1$. Here and throughout the proof of this lemma, $C$ is the absolute constant from the statement of \Cref{thm:weakPFR}. In particular, we have
\begin{equation} \label{contain1}
D_{a_1}(P_1) \subseteq \Gamma_1.
\end{equation}
Next, consider the directions from $a_2$ to $P_1$, i.e. the set
\[
D_{a_2}(P_1)\subseteq D_{a_2}(P).
\]
We have
\[
|D_{a_2}(P_1)D_{a_2}(P_1) | \leq |D_{a_2}(P)D_{a_2}(P)| \leq L^2n \ll L^{2+2C}|P_1|  \ll L^{2+2C}|D_{a_2}(P_1)|.
\]
The last inequality in the line above uses the fact that since each line through $a_2$ can contain at most $\gamma$ points of $Q$ (and $ P_1 \subseteq Q$), we have $$|D_{a_2}(P_1)| \gg |P_1|.$$
A second application of \Cref{thm:weakPFR} gives a set $E_2 \subseteq D_{a_2}(P_1)$ and a subgroup $\Gamma_2$ such that
\[
    |E_2| \gg \frac{|D_{a_2}(P_1)|}{L^{C(2+2C)}} \geq \frac{|D_{a_2}(P_1)|}{L^{3C^2}}, \,\,\,\, \rank (\Gamma_2) \ll \log L, \,\,\,\,\, E_2 \subseteq \Gamma_2.
    \]
Let $P_2 \subseteq P_1$ be a set of elements of $P_1$ corresponding to the direction set $E_2$. That is, $P_2$ is a subset of $P_1$ satisfying
\[
E_2= D_{a_2}(P_2).
\]
In particular,
\[
|P_2|=|E_2| \gg \frac{|D_{a_2}(P_1)|}{L^{3C^2}} \gg \frac{ |P_1|}{L^{3C^2}} \gg \frac{n}{ L^{4C^2}} 
\]
and
\begin{equation} \label{contain2}
D_{a_2}(P_2) \subseteq \Gamma_2.
\end{equation}
Repeating this process a third and final time, consider the directions from $a_3$ to $P_2$, i.e. the set    
\[
D_{a_3}(P_2)\subseteq D_{a_3}(P).
\]
We have
\[
|D_{a_3}(P_2)D_{a_3}(P_2)| \leq |D_{a_3}(P)D_{a_3}(P)| \leq L^2n \ll L^{4C^2+2}|P_2| \ll L^{4C^2+2}|D_{a_3}(P_2)| \leq L^{5C^2}|D_{a_3}(P_2)|.
\]
Here we have used the fact that
$$|D_{a_3}(P_2)| \gg |P_2|,$$
which again derives from the relationship between $a_3$ and $Q$ which is given by \Cref{lem:preprocessing}. A third application of \Cref{thm:weakPFR} gives a set $E_3 \subseteq D_{a_3}(P_2)$ and a subgroup $\Gamma_3$ such that
\begin{gather*}
    |E_3| \gg \frac{ |D_{a_3}(P_2)|}{L^{5C^3}} \gg \frac{ |P_2|}{L^{5C^3}} \gg \frac{ n}{L^{9C^3}}, 
    \\ \rank (\Gamma_3) \ll \log L,
    \end{gather*}
    and $E_3 \subseteq \Gamma_3$. Let $P_3 \subseteq P_2 \subseteq P_1$ be a representative point set corresponding to the direction set $E_3$, i.e. a set $P_3$ satisfying    $D_{a_3}(P_3)=E_3$. In particular, we have
\begin{equation} \label{contain3}
D_{a_3}(P_3) \subseteq \Gamma_3.
\end{equation}
and
\[
|P_3| \gg |E_3| \gg \frac{n}{L^{9C^3}}.
\]
     This set $P_3$ plays the role of the set $P'$ in the statement of the lemma. The claim on the size of $P'$ is satisfied by setting $C'=9C^3$.

    Multiplying the three subgroups together, we obtain a group $\Gamma :=\Gamma_1 \Gamma_2 \Gamma_3$, such that $\Gamma$ is of rank $O\left (\log L \right )$. This is the subgroup from the statement of the lemma. It remains to verify the claim \eqref{containment}. Indeed, for $1 \leq i \leq 3$, we have
    \[
    D_{a_i}(P') \subseteq D_{a_i}(P_i) \subseteq \Gamma_i \subseteq \Gamma,
    \]
    where the second containment above follows from \eqref{contain1}, \eqref{contain2} and \eqref{contain3}.
\end{proof}
\section{Applying the subspace theorem}
In this section we will apply the subspace theorem to prove the following lemma.

\begin{lemma} \label{lem:subspaceapp}
    Let $Q \subset \mathbb R^2 \cong  \mathbb C$ and let $a_1,a_2$ and $a_3$ be three distinct points of $\mathbb R^2 \setminus Q$ such that no more than $T$ points from the set $Q \cup \{a_1,a_2,a_3\}$ lie on a line or circle. Suppose that there is a rank $r$ subgroup $\Gamma \leq \mathbb C^*$ such that
    \[
    D_{a_i}(Q) \subseteq \Gamma
    \]
    holds for $1 \leq i \leq 3$. Then $|Q| =O( Te^{C_1r})$, where $C_1$ is an absolute constant.
\end{lemma}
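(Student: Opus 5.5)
We eliminate $z$ from the three conditions $D_{a_i}(\{z\})\subseteq\Gamma$ to obtain a single $S$-unit type linear equation over $\Gamma$. We then apply the subspace theorem, in the quantitative form of Evertse--Schlickewei--Schmidt, and handle degenerate solutions through the incidence hypothesis. Fix $z\in Q$ and write $x_i=\frac{z-a_i}{\overline z-\overline{a_i}}\in\Gamma$ for $i=1,2,3$. Then the pair $(z,w)=(z,\overline z)$ satisfies the three linear equations
\[
z - x_i w = a_i - x_i\overline{a_i}, \qquad 1\le i\le 3 .
\]
Since this system of three equations in two unknowns is consistent, the $3\times 3$ determinant with rows $(1,\,-x_i,\,a_i-x_i\overline{a_i})$ vanishes. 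Expanding along the last column, this determinant is a fixed $\mathbb C$-linear combination of the monomials $x_1,x_2,x_3,x_1x_2,x_1x_3,x_2x_3$. The coefficients are differences $a_j-a_k$ and $\overline{a_j}-\overline{a_k}$, all nonzero since the $a_i$ are distinct. So every $z\in Q$ yields a solution $(x_1,x_2,x_3)\in\Gamma^3$ of
\[
\sum_{m} c_m\, u_m = 0,
\]
where the $u_m$ range over these six monomials, all lying in $\Gamma$ of rank $r$. Conversely, the pair $(x_1,x_2)$ determines $z$ uniquely as the intersection of the line through $a_1$ with direction given by $x_1$ and the line through $a_2$ with direction given by $x_2$. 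This fails only when $z$ lies on the line $a_1a_2$, which accounts for at most $T$ points. So it suffices to count solutions.

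For non-degenerate solutions, meaning no proper subsum vanishes, the theorem of Evertse--Schlickewei--Schmidt bounds the number of projective solutions of a linear equation in $k$ unknowns from a rank $r$ group by $\exp(O_k(r))$. Here $k=6$, so we get $e^{C_1 r}$ classes. Within a projective class all monomials are scaled by a common factor $\lambda$. The ratio $u_{x_1x_2}/u_{x_1}=x_2$ is therefore fixed, and likewise $x_1$ and $x_3$, so $z$ is determined. This gives $O(e^{C_1r})$ points.

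For degenerate solutions, we partition the six monomials according to a minimal family of vanishing subsums; there are $O(1)$ such partitions. Each vanishing subsum with at least two terms is itself a non-degenerate equation, so it fixes each ratio $u_m/u_{m'}$ within that block to one of $e^{O(r)}$ values. The key geometric input is that fixing a single such ratio confines $z$ to a curve with few points of $Q$:
\begin{itemize}
\item fixing $x_i$ puts $z$ on a line through $a_i$;
\item fixing $x_i/x_j$ fixes the angle $a_i z a_j$ modulo $\pi$, so $z$ lies on a circle (or line) through $a_i,a_j$;
\item fixing ratios such as $x_1x_2/x_3$ or $x_1x_2/x_1x_3=x_2/x_3$ reduces to the previous two cases after cancellation.
\end{itemize}
Each of these curves contains at most $T$ points, so every partition whose blocks force one of these ratios contributes at most $Te^{O(r)}$ points.

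The main obstacle is the ratios that do not reduce to a line or circle. An example is $x_1x_2$ being fixed, that is, $\theta_1+\theta_2$ constant, whose locus is a rectangular hyperbola. My plan here is to use the explicit coefficient structure. Since $x_1x_2$ and $x_3$ appear with coefficients of matching type, I would check case by case that every minimal vanishing subsum either contains two monomials whose ratio is of the "good" form $x_i$ or $x_i/x_j$, or else forces a relation incompatible with $z\ne a_i$. If a hyperbola case survives, I would intersect it with a second fixed ratio coming from another block (or from the complementary subsum). The two resulting curves meet in $O(1)$ points by B\'ezout unless they share a component, and shared components can only be lines, which the $T$-hypothesis controls. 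Summing over $O(1)$ partitions and $e^{O(r)}$ ratio values gives $|Q|=O(Te^{C_1r})$.
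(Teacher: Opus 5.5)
Your elimination is correct and coincides with the paper's. The determinant gives exactly the paper's six-term equation before it divides by $u$ (your $x_1,x_2,x_3$ are its $s,t,u$). Your treatment of non-degenerate solutions is sound, as is the splitting of a degenerate solution into minimal vanishing blocks, each pinning the ratios inside it to $e^{O(r)}$ values.

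The gap is in the degenerate case. The first half of your third bullet is false: $x_1x_2/x_3$ admits no cancellation. Fixing it, i.e.\ fixing $\theta_1+\theta_2-\theta_3$ modulo $\pi$, confines $z$ to a cubic curve, about which the hypothesis on lines and circles says nothing. This is precisely the paper's problematic term $st/u$. The example you do flag, $x_1x_2$ fixed, never arises as a ratio of two of your six monomials. For the bad case you offer only a plan, and its key claim, that two such curves can share only line components, is not justified.

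The missing step is short. Call a pair of monomials good if its ratio is some $x_i$ or $x_i/x_j$. A pair is good if it consists of two singles, or of two doubles (which always share an index, giving $x_j/x_k$), or of $x_i$ together with a double containing $i$. Hence a block with no good pair must have the form $\{x_k, x_ix_j\}$ with $\{i,j,k\}=\{1,2,3\}$. The only partition with no good pair in any block is therefore the perfect matching $\{x_1,x_2x_3\},\{x_2,x_1x_3\},\{x_3,x_1x_2\}$. Each of its blocks is a two-term vanishing sum, so
\[
\frac{x_2x_3}{x_1}=-\frac{a_3-a_2}{\overline{a_3}-\overline{a_2}}, \qquad \frac{x_1x_3}{x_2}=-\frac{a_1-a_3}{\overline{a_1}-\overline{a_3}}
\]
are determined exactly, and multiplying them fixes $x_3^2$. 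Thus $z$ lies on one of two lines through $a_3$, giving at most $2T$ points, and no B\'ezout argument is needed. This is the counterpart of the paper's final ``quadratic in $u$'' step. The paper reaches it instead by taking a maximal vanishing subsum and applying the subspace theorem a second time to a three-term equation. With this step inserted, your argument gives $|Q|=O(Te^{C_1r})$.
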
 

We will use the following version of the subspace theorem, due to Evertse, Schlickewei, and Schmidt \cite{ESS}.

\begin{theorem}[Evertse, Schlickewei, Schmidt]\label{thm:subspace}
     For all $a_1,...,a_k \in \mathbb C^*$, the number of non-degenerate solutions to the linear equation
     $$a_1 x_1 + ... + a_k x_k =1$$
     with each $x_i$ within a rank $r$ multiplicative subgroup $\Gamma$ is bounded by $O\left( \exp{ \left( (6k)^{3k}(r+1) \right) }\right)$. Here, a non-degenerate solution is a solution $(x_1,...,x_k) \in \Gamma^k$ where no sub-sum in the linear equation is equal to zero.
 \end{theorem}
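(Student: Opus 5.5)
The plan is to follow the strategy of Evertse, Schlickewei and Schmidt \cite{ESS}: reduce to a number field, split solutions by height, and induct on $k$. Below, $a_1,\dots,a_k$ are the fixed coefficients and $x_1,\dots,x_k$ are the unknowns in $\Gamma$.

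\textbf{Step 1: reduce $\Gamma$ to a finitely generated group.} Any finite set of solutions lies in a finitely generated subgroup of $\Gamma$ of rank at most $r$. So it suffices to prove the bound for $\Gamma$ equal to the division group
\[
\overline{\Gamma_0}=\{x : x^m\in \Gamma_0 \text{ for some } m\ge 1\}
\]
of a finitely generated group $\Gamma_0=\langle \gamma_1,\dots,\gamma_r\rangle$, with a bound that does not depend on the generators or on the $a_i$.

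\textbf{Step 2: specialize to a number field.} The field $\mathbb Q(a_1,\dots,a_k,\gamma_1,\dots,\gamma_r)$ is finitely generated over $\mathbb Q$. I would use a specialization homomorphism to $\overline{\mathbb Q}$ with three properties. It should be injective on any prescribed finite set of solutions. It should preserve the rank of $\Gamma_0$. It should send non-degenerate solutions to non-degenerate solutions, which holds as long as it avoids the finitely many nonzero values of the subsums. This reduces the problem to coefficients and a group inside a number field $K$. After further normalizing, we may take $\Gamma_0$ inside a group of $S$-units.

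\textbf{Step 3: split solutions into small and large height.} Consider the projective point $(a_1x_1:\dots:a_kx_k:-1)$ and fix a height threshold.
\begin{itemize}
\item Solutions of small height are counted directly. A height-counting or packing argument in the rank-$r$ lattice $\Gamma/\mathrm{tors}$, made uniform using a Lehmer-type lower bound on heights of non-torsion points, gives at most $e^{O(k(r+1))}$ of them.
\item For solutions of large height, apply the absolute quantitative Parametric Subspace Theorem of Evertse and Schlickewei, using the $S$-unit structure at the relevant places. It places all these solutions in a union of at most $e^{O((3k)^{2k}(r+1))}$ proper linear subspaces of $K^{k+1}$.
\end{itemize}

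\textbf{Step 4: induct on $k$.} On each such subspace there is a nontrivial linear relation among the $x_i$, and I would eliminate one variable. Partition the coordinates according to which subsums the relation couples. Non-degeneracy of the original solution then guarantees that each resulting equation, after normalizing by one term, is again a non-degenerate equation of the same shape with at most $k-1$ unknowns in $\Gamma$. The inductive hypothesis bounds the solutions on each subspace. Multiplying the number of subspaces by the inductive bound, and summing over the at most $2^k$ ways of splitting indices, gives a recursion. Its solution is $\exp\bigl((6k)^{3k}(r+1)\bigr)$. The base case $k=1$ is trivial, since $a_1x_1=1$ has exactly one solution.

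\textbf{Main obstacle.} I expect Step 3 to be the hard part. The number of exceptional subspaces must depend only on $k$ and $r$, and not on the degree of $K$, the number of places in $S$, or the heights of the $a_i$. Achieving this requires the absolute, degree-free version of the quantitative subspace theorem. It also requires a careful choice of height threshold so that the "large" solutions meet that theorem's hypotheses while the "small" ones are still counted by $r$ alone. The rank dependence is the most delicate point. The number of places could be large, but one can replace the places by a bounded number of "directions" in $\Gamma\otimes\mathbb R\cong\mathbb R^r$, covered by $e^{O(r)}$ cones. I would try this cone-covering trick first.
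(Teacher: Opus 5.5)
The paper gives no proof of this statement. It is quoted from \cite{ESS} and used as a black box, so the question is whether your outline is a sound reconstruction. Your architecture is the right one: specialisation to $\overline{\mathbb Q}$, measuring a solution by the height of $y=(a_1x_1,\dots,a_kx_k)$, the absolute parametric subspace theorem made independent of the places by covering directions, and induction on the number of terms via minimal vanishing subsums. However, the step that counts small solutions does not work as you describe it.

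After Step 1, $\Gamma$ is a division group, so $\Gamma/\mathrm{tors}$ is a $\mathbb Q$-vector space, not a lattice. For example, the division group of $\langle 2\rangle$ contains $2^{1/m}$, which has height $(\log 2)/m$, and $\Gamma$ also contains every root of unity. So $\Gamma$ has infinitely many elements of small height, and no packing argument in the group alone can bound the number of small solutions. A Lehmer-type bound cannot repair this. Dobrowolski-type lower bounds decay with the degree, which is unbounded in this problem. Even for a finitely generated $\Gamma_0$, the minimal height of a non-torsion element depends on the generators (take $\Gamma_0=\langle 2^{1/m}\rangle$). So no count depending only on $k$ and $r$ comes out of this route.

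The sparsity of small solutions has to come from the equation itself. If $x\neq x'$ are non-degenerate solutions, then $z=x'/x$ (coordinatewise) is a non-degenerate solution of $\sum_i (a_ix_i)z_i=1$, because its subsums are subsums of the original equation evaluated at $x'$. The missing ingredient is a uniform small-points estimate: such an equation has at most $B(k)$ non-degenerate solutions with $h(z)\le\epsilon(k)$, independently of the coefficients and of the field. This is a Bogomolov--Zhang type statement for the hyperplane, and it contains in particular the bound for solutions in roots of unity. With this in hand the count goes as follows.
\begin{enumerate}
\item Fix one small solution $x^{(0)}$. Every small solution then has $h(x/x^{(0)})=O_k(R)$, where $R$ is your threshold.
\item Cover the ball of that radius, in the height-normed space $\Gamma'\otimes\mathbb R$, by $e^{O_k(\rho)}$ balls of radius $\epsilon(k)/2$. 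Here $\Gamma'$ is the group generated by the solution vectors and $\rho$ is its rank.
\item Each such ball contains at most $B(k)$ solutions.
\end{enumerate}

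Note also that $\Gamma'\subseteq\Gamma^k$ can have rank $kr$, not $r$. Both this covering and your cone covering take place in $\Gamma'\otimes\mathbb R$, not in $\Gamma\otimes\mathbb R\cong\mathbb R^r$. Carried out correctly, the argument gives $\exp\bigl((6k)^{3k}(kr+1)\bigr)$. This is what the theorem of \cite{ESS}, which concerns a subgroup of $(\mathbb C^*)^k$ of given rank, yields when applied to $\Gamma^k$. Your claim that the recursion solves to $\exp\bigl((6k)^{3k}(r+1)\bigr)$ is therefore unsupported. The discrepancy is harmless for the paper, which uses the theorem only with $k\le 5$ and needs only a bound of the form $e^{O(r)}$.
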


 Let us now prove \Cref{lem:subspaceapp}.

 \begin{proof}For each $z \in Q$, we define the three numbers
     $$s =  \frac{z - a_1}{\overline{z} - \overline{a_1}}, \quad t =  \frac{z - a_2}{\overline{z} - \overline{a_2}}, \quad  u = \frac{z - a_3}{\overline{z} - \overline{a_3}}.$$
     We now define a map $\phi$ from $Q$ to $\Gamma^5$ in the following way:
     $$\phi(z) = \left( s,t, \frac{s}{u}, \frac{t}{u}, \frac{st}{u} \right).$$
     This map does indeed go to $\Gamma^5$ since each of $s,t,u \in \Gamma$.
     We will begin with the following small lemma.

     \begin{lemma}\label{lem:variablesdefinez}
         Fixing any one value of $s,t,u$, or ratio of two of these variables, there are at most $T$ values of $z \in Q$ which give this fixed value.
     \end{lemma}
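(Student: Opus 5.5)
Fixing a single value of $s$, $t$ or $u$ is the easy case. If $s=\frac{z-a_1}{\overline{z}-\overline{a_1}}$ is fixed, then the direction of the line from $a_1$ to $z$ is fixed modulo $\pi$, because the argument of $s$ is twice the direction angle. Hence $z$ lies on one fixed line through $a_1$. By hypothesis this line contains at most $T$ points of $Q\cup\{a_1,a_2,a_3\}$, so at most $T$ values of $z\in Q$ give this value of $s$. The same argument applies to $t$ and $u$ with $a_2$ and $a_3$ in place of $a_1$.

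For a ratio such as $s/t$ (the cases $s/u$ and $t/u$ are identical, and inverse ratios give the same condition), I would use the earlier lemma on $D_p(Q)/D_p(Q)$, but with the roles of centre and endpoints swapped. The quantity $s/t = e^{2i(\theta_1-\theta_2)}$ records twice the oriented angle between the lines $a_1z$ and $a_2z$, taken modulo $\pi$. So fixing $s/t$ fixes the directed angle $\angle(za_1, za_2)$ modulo $\pi$, which is the angle at $z$ subtended by the segment $a_1a_2$. By the inscribed angle theorem in its directed-angle form, the locus of $z$ with this directed angle modulo $\pi$ fixed is a single circle through $a_1$ and $a_2$, minus those two points. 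The degenerate case is when the value is $1$, meaning the lines $za_1$ and $za_2$ are parallel and hence coincide; then $z$ lies on the line $a_1a_2$. In either case $z$ is confined to one line or circle, which contains at most $T$ points of $Q\cup\{a_1,a_2,a_3\}$, giving the bound.

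The main point needing care is justifying that the directed-angle-mod-$\pi$ locus is exactly one circle rather than a pair of symmetric arcs. One could compute this directly: the condition $\frac{(z-a_1)(\overline{z}-\overline{a_2})}{(\overline{z}-\overline{a_1})(z-a_2)}=\lambda$ says that $\frac{z-a_1}{z-a_2}$ is a real multiple of a fixed square root of $\lambda$. That is, the cross-ratio-type quotient $(z-a_1)/(z-a_2)$ lies on a fixed line through $0$. Its preimage under the Möbius map $z\mapsto (z-a_1)/(z-a_2)$ is a line or circle through $a_1$ and $a_2$, which settles the claim cleanly.
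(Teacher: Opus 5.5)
Your proposal is correct and follows essentially the same route as the paper: a fixed value of $s$, $t$ or $u$ confines $z$ to a line through $a_i$, and a fixed ratio such as $s/t$ confines $z$ to a line or circle through $a_1,a_2$. The only difference is packaging. The paper compares pairs $z_1,z_i$ with the same ratio, observes that the cross ratio of $z_1,z_i,a_1,a_2$ is real, and cites the criterion that this forces the four points to be collinear or concyclic. You instead describe the whole fibre at once as the preimage of a line through $0$ under $z\mapsto (z-a_1)/(z-a_2)$, which is the same M\"obius-geometry fact stated directly.
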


     \begin{proof}[Proof of \Cref{lem:variablesdefinez}]
         Suppose we fix a single value of $s =  \frac{z - a_1}{\overline{z} - \overline{a_1}}$. The points of $Q$ which yield this fixed value of $s$ correspond to points of $Q$ lying on a fixed line through $a_1$. Since there are at most $T$ points of $Q$ collinear, at most $T$ values of $z \in Q$ can give this fixed value. Precisely the same argument applies to fixed values of $t$ and $u$.

         Now suppose we fix the ratio $\frac{s}{t} = \frac{(z-a_1)(\overline{z} - \overline{a_2})}{(\overline{z} - \overline{a_1})(z-a_2)}$. Assume that there are $T+1$ values $z_1,z_2,...,z_{T+1} \in Q$ which give this fixed ratio value. Consider the pair $z_1,z_i$ for $i=2,...,T+1$. Since they must give the same value of $\frac{s}{t}$, we have
         $$ \frac{(z_1-a_1)(\overline{z_1} - \overline{a_2})}{(\overline{z_1} - \overline{a_1})(z_1-a_2)} =  \frac{(z_i-a_1)(\overline{z_i} - \overline{a_2})}{(\overline{z_i} - \overline{a_1})(z_i-a_2)} \implies \frac{(z_1 - a_1)(z_i - a_2)}{(z_1 - a_2) (z_i-a_1)} = \frac{(\overline{z_1} - \overline{a_1})(\overline{z_i} - \overline{a_2})}{(\overline{z_1} - \overline{a_2}) (\overline{z_i}-\overline{a_1})}.$$

         The left hand side of this last equality is actually the cross ratio of the four complex numbers $z_1,z_i,a_1,a_2$. Furthermore, the right hand side is precisely its complex conjugate - since these are equal, we conclude that this cross ratio is real. We then need the following result concerning cross ratios, see for instance \cite[Section 3.2, Theorem 13]{Ahlfors}

          \begin{lemma}\label{lem:crossratio}
     The cross ratio of four complex numbers $a,b,c,d$ is real if and only if the four points are collinear or co-circular.
 \end{lemma}
        Applying \Cref{lem:crossratio}, we conclude that $a_1,a_2,z_1,$ and $z_i$ are either collinear or co-circular for each $i=2,...,T+1$. Depending on whether $z_1,a_1,a_2$ define a line or a circle, we find either a line or a circle containing all of $z_1,z_2,...,z_{T+1}$ - in either case contradicting our assumption that at most $T$ points of $Q$ are collinear or co-circular. Therefore at most $T$ points $z \in Q$ can give this fixed ratio $\frac{s}{t}$. Precisely the same argument can be applied to the other ratios $\frac{s}{u}$ and $\frac{t}{u}$. This concludes the proof of \Cref{lem:variablesdefinez}.
     \end{proof}
     We now return to proving \Cref{lem:subspaceapp}. Consider the equations
     $$s =  \frac{z - a_1}{\overline{z} - \overline{a_1}}, \quad t =  \frac{z - a_2}{\overline{z} - \overline{a_2}}, \quad  u = \frac{z - a_3}{\overline{z} - \overline{a_3}}.$$
     Using the first and second equations to eliminate $z$ yields the single equation
     $$s(a_3-a_2) + t(a_1-a_3) + u(a_2 - a_1) + st(\overline{a_2} - \overline{a_1}) +  su(\overline{a_1} - \overline{a_3}) + tu(\overline{a_3} - \overline{a_2})=0.$$
     For any $z \in Q$ we have $s,t,u \neq 0$. We can then divide this equation by $u(a_1-a_2)$ and rearrange, obtaining the equation
     \begin{equation} \label{eqn:subspaceeqnstu}
     s \left( \frac{\overline{a_3} - \overline{a_1}}{a_2 - a_1} \right) + t \left( \frac{\overline{a_2} - \overline{a_3}}{a_2 - a_1} \right) + \left( \frac{s}{u} \right) \left( \frac{a_2 - a_3}{a_2 - a_1} \right) + \left( \frac{t}{u} \right) \left( \frac{a_3 - a_1}{a_2 - a_1} \right) + \left( \frac{st}{u} \right) \left( \frac{\overline{a_1} - \overline{a_2}}{a_2 - a_1} \right) =1.\end{equation}
     We conclude that for any $z \in Q$, the image $\phi(z)$ gives a solution in $\Gamma^5$ to the linear equation
     \begin{equation} \label{eqn:subspaceeqnxvars}
     x_1 \left( \frac{\overline{a_3} - \overline{a_1}}{a_2 - a_1} \right) + x_2 \left( \frac{\overline{a_2} - \overline{a_3}}{a_2 - a_1} \right) + x_3 \left( \frac{a_2 - a_3}{a_2 - a_1} \right) + x_4 \left( \frac{a_3 - a_1}{a_2 - a_1} \right) + x_5 \left( \frac{\overline{a_1} - \overline{a_2}}{a_2 - a_1} \right) =1.\end{equation}
 Applying \Cref{thm:subspace} to this equation, we conclude that there are at most $O\left( \exp{ \left( (30)^{15}(r+1) \right) }\right)$ non-degenerate solutions $(x_1,x_2,x_3,x_4,x_5) \in \Gamma^5$ to this equation. Each such non-degenerate solution can be mapped to at most $T$ times by a point $z \in Q$, by \Cref{lem:variablesdefinez}. Therefore the number of $z \in Q$ which map to a non-degenerate solution of \cref{eqn:subspaceeqnxvars} under $\phi$ is at most $O\left(T \exp{ \left( (30)^{15}(r+1) \right) }\right) \ll T e^{C_2r}$.

 \subsection{Bounding degenerate solutions.}
 
 We now need to bound the number of points $z\in Q$ which map to a degenerate solution of \cref{eqn:subspaceeqnxvars}. For each degenerate solution $(x_1,x_2,x_3,x_4,x_5)\in \Gamma^5$, there is a (possibly not unique) maximal sub-sum, call it $\Sigma$, which vanishes - if this is not unique, choose a maximal vanishing sub-sum arbitrarily. Here `maximal' means with respect to the number of terms of \cref{eqn:subspaceeqnxvars}. For any such $\Sigma$, we let $\Sigma'$ be the complement of the terms of $\Sigma$. This means that any degenerate solution $(x_1,x_2,x_3,x_4,x_5) \in \Gamma^5$ to \cref{eqn:subspaceeqnxvars} has been uniquely associated with some maximal vanishing sub-sum $\Sigma$, and furthermore the complement $\Sigma'$ satisfies $\Sigma' =1$. Moreover, the associated solution to $\Sigma'=1$ cannot be degenerate, as if there were some vanishing sub-sum $\Sigma'' = 0$ within $\Sigma'$, we would have that $\Sigma + \Sigma'' = 0$ is a vanishing sub-sum of the degenerate solution, contradicting maximality of $\Sigma$. 
 
 Suppose that $\Sigma'$ includes any of the first four terms of \cref{eqn:subspaceeqnxvars}. We can then apply \Cref{thm:subspace} to the equation $\Sigma'=1$, obtaining at most $e^{C_3r}$ non-degenerate solutions to the equation $\Sigma' =1$. Since this equation includes one of the first four variables, \Cref{lem:variablesdefinez} can be applied to show that such solutions can only be mapped to by at most $T$ values of $z\in Q$. Therefore in each of the $O(1)$ cases where the maximal $\Sigma$ yields a sum $\Sigma' = 1$ with $\Sigma'$ involving one of the first four terms, there are at most $Te^{C_3r}$ values of $z$ which can give such a degenerate solution. 

 The only case left over is when $\Sigma$ includes all of the first four terms - specifically, we have
 $$s \left( \frac{\overline{a_3} - \overline{a_1}}{a_2 - a_1} \right) + t \left( \frac{\overline{a_2} - \overline{a_3}}{a_2 - a_1} \right) + \left( \frac{s}{u} \right) \left( \frac{a_2 - a_3}{a_2 - a_1} \right) + \left( \frac{t}{u} \right) \left( \frac{a_3 - a_1}{a_2 - a_1} \right)=0,$$
 and thus
 \[
 s(\overline{a_3} - \overline{a_1}) + t(\overline{a_2} - \overline{a_3}) + \left( \frac{s}{u} \right) \left( a_2 - a_3 \right) + \left( \frac{t}{u} \right) \left( a_3 - a_1 \right)=0.
 \]
 After dividing through by $t(\overline{a_3} - \overline{a_2})$ (which is never zero) and rearranging, this becomes the equation
 \begin{equation} \label{eqn:smallersubspace}
     \left(\frac{s}{t}\right) \left( \frac{\overline{a_3} - \overline{a_1}}{\overline{a_3} - \overline{a_2}} \right) + \left( \frac{1}{u} \right) \left( \frac{a_3 - a_1}{\overline{a_3} - \overline{a_2}}\right) + \left( \frac{s}{ut} \right) \left( \frac{a_2 - a_3}{\overline{a_3} - \overline{a_2}} \right)=1
 \end{equation}
 We can now apply \Cref{thm:subspace} to this equation, again giving at most $T e^{C_4 r}$ values of $z \in Q$ which can map to a non-degenerate solution within $\Gamma^3$ of \cref{eqn:smallersubspace}, since such a solution determines the value of $u$, and we can then apply \Cref{lem:variablesdefinez}. We must now consider degenerate solutions to \cref{eqn:smallersubspace}. Any degenerate solution where a single term in the sum vanishes cannot be mapped to by any $z \in Q$, since all of $\frac{s}{t}$, $\frac{1}{u}$, and $\frac{s}{ut}$ are non-zero. We are then left only with solutions where a sum of two terms vanish - implying that the third remaining term is equal to one. If this remaining term is the first or second term of \cref{eqn:smallersubspace}, then the value of either $\frac{s}{t}$ or of $u$ is determined, and then applying \Cref{lem:variablesdefinez} implies that there can be only $T$ values of $z$ which map to such solutions. The final case is therefore when we have the equations
 $$ \left(\frac{s}{t}\right) \left( \frac{\overline{a_3} - \overline{a_1}}{\overline{a_3} - \overline{a_2}} \right) + \left( \frac{1}{u} \right) \left( \frac{a_3 - a_1}{\overline{a_3} - \overline{a_2}}\right) = 0, \quad  \left( \frac{s}{ut} \right) \left( \frac{a_2 - a_3}{\overline{a_3} - \overline{a_2}} \right)=1.$$
 The second equation can be rearranged to the form $\frac{s}{t} = u \left( \frac{\overline{a_3} - \overline{a_2}}{a_2 - a_3} \right)$, which upon substitution into the first equation yields a quadratic equation for $u$. Therefore there are at most two values of $u$ possible for this case, and so at most $2T$ values of $z \in Q$ can map to a solution of this form.

 Putting all these cases together, we conclude that the total number of possible values of $z$ is $O(Te^{C_1r})$ for some absolute constant $C_1$. This concludes the proof of \Cref{lem:subspaceapp}.
 \end{proof}

 \section{The case of no rich lines or circles }

A key step towards proving the main result of this paper is the following theorem. In particular, this result proves Theorem \ref{thm:main} unless there is a line or a circle containing $n^{9/10}$ points of $P$. The proof is a simple combination of the results we have established in the previous two sections.

\begin{theorem}\label{thm:mainnearly}
    For all $\epsilon>0$ there exists a constant $c>0$ and $n_0\in \mathbb N$ such that if $P \subset \mathbb R^2$ has cardinality $n \geq n_0$ and $|\cA_p(P)| \leq n^{1+c}$ for all $p \in P$, then there is a line or circle containing at least $n^{1-\epsilon}$ elements of $P$.
\end{theorem}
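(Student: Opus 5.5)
We argue by contradiction, combining \Cref{lem:subspacetrap} and \Cref{lem:subspaceapp}. Fix $\epsilon>0$ and suppose that $|\cA_p(P)| \leq n^{1+c}$ for every $p \in P$, and that no line or circle contains $n^{1-\epsilon}$ points of $P$. Set $T := n^{1-\epsilon}$ and $L := n^{c}$, with $c=c(\epsilon)>0$ to be chosen small at the end. For $n$ large we have $T \leq \alpha n$, so the hypotheses of \Cref{lem:subspacetrap} hold: no line contains more than $T$ points and $|\cA_p(P)| \leq Ln$ for all $p$.

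\Cref{lem:subspacetrap} produces distinct $a_1,a_2,a_3 \in P$, a subgroup $\Gamma \leq \mathbb C^*$ with $\rank(\Gamma) \leq C''\log L = C'' c \log n$, and a set $P' \subset P\setminus\{a_1,a_2,a_3\}$ satisfying
\[
|P'| \gg n/L^{C'} = n^{1-C'c}
\]
and $D_{a_i}(P') \subseteq \Gamma$ for $i=1,2,3$. Now apply \Cref{lem:subspaceapp} with $Q=P'$. The points $a_i$ lie outside $Q$, and $Q\cup\{a_1,a_2,a_3\}\subseteq P$ has at most $T$ points on any line or circle (if we want strict inequality, we may replace $T$ by $T+3$, which is harmless). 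The lemma gives
\[
|P'| \ll T e^{C_1 r} \leq n^{1-\epsilon} \cdot n^{C_1C''c}.
\]

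Comparing the two bounds yields $n^{1-C'c} \ll n^{1-\epsilon+C_1C''c}$. Choosing $c$ so that $(C'+C_1C'')c < \epsilon/2$ gives $n^{\epsilon/2} \ll 1$, which is false for $n\geq n_0$. This contradiction proves the theorem.

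There is no deep obstacle here; the work is bookkeeping. First, the rank bound $\rank(\Gamma)\ll \log L$ must enter the exponent linearly, so that $e^{C_1 r}$ is only a small power of $n$. Second, the loss $L^{C'}$ must likewise be a small power of $n$. Both requirements are met because all the constants $C', C'', C_1$ are absolute and $c$ is chosen depending only on $\epsilon$. Finally, the implied constants in $\gg$ and $\ll$ are absorbed by taking $n_0$ large.
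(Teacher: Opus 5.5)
Your proposal is correct and is essentially the paper's proof. The paper lets $T$ be the maximum number of points of $P$ on a line or circle and derives $T \gg n^{1-c(C'+C_1C_2)}$ directly from \Cref{lem:subspacetrap} and \Cref{lem:subspaceapp} with $L=n^{c}$; you run the same computation by contradiction with $T=n^{1-\epsilon}$, and your replacement of $T$ by $T+3$ is unnecessary, since $Q\cup\{a_1,a_2,a_3\}\subseteq P$ already has at most $T$ points on any line or circle.
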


\begin{proof}
Fix $\epsilon >0$ and let $P\subseteq \mathbb R^2$ satisfy that $|\cA_p(P)| \leq n^{1+c}$ for all $p \in P$. Let $T$ denote the maximum number of points from $P$ on a line or circle. We will show that, by taking $c>0$ sufficiently small, we can ensure that $T \geq n^{1-\epsilon}$. 

We can assume $T \leq \alpha n$ where $\alpha$ is the constant from the statement of  \Cref{lem:subspacetrap}, as otherwise we are done. An application of \Cref{lem:subspacetrap} gives us three elements $a_1,a_2,a_3 \in P$, a subset $P' \subset P \setminus \{ a_1,a_2,a_3 \}$ and a subgroup $\Gamma$ such that
\[
D_{a_i}(P') \subseteq \Gamma , \,\,\,\,\, \forall \,\, 1 \leq i \leq 3
\]
and with the quantitative information
\[
|P'| \gg  n^{1-cC'}, \,\,\,\,\,\,\rank(\Gamma) \leq C_2c \log n,
\]
where $C_2 >0$ is an absolute constant. \Cref{lem:subspaceapp} then implies that
\[
n^{1-cC'} \ll |P'| \ll T e^{C_1C_2c\log n}=Tn^{C_1C_2c},
\]
which rearranges to
\begin{equation} \label{rearrange}
T \gg n^{1-c(C'+C_1C_2)}
\end{equation}
We can set $c=\frac{\epsilon}{2(C'+C_1C_2)}$ to obtain $T \gg n^{1-\frac{\epsilon}{2}}$. Taking $n$ sufficiently large then ensures that $T \geq n^{1-\epsilon}$.

\end{proof}

 \section{The case of rich lines or circles}

 With \Cref{thm:mainnearly} now proven, the task of proving \Cref{thm:main} is reduced to the case when at least $n^{9/10}$ points of $P$ lie on a line or circle. The rich circle case was considered by Konyagin, Passant and Rudnev \cite{KPR}, where it was shown that sum-product type estimates can be applicable, and in this section we lightly adapt and generalise these observations. The following result of Elekes, Nathanson and Ruzsa \cite{ENR} connecting sum sets and convexity will be used.

 \begin{theorem} \label{thm:ENR}
    Let $I \subset \mathbb R$ be an interval. For any strictly convex or concave function $f:I \rightarrow \mathbb R$ and $A \subset \mathbb R$,
    \[
 \max \{|A-A|, |f(A)-f(A)| \} \gg |A|^{5/4}.
 \]
 \end{theorem}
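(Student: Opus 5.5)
The plan is an incidence argument: encode $A$ and $f(A)$ as a point set together with a family of translates of the graph of $f$, and apply a Szemerédi--Trotter type bound. We implicitly take $A \subset I$, so that $f(A)$ makes sense.

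\emph{Setup.} Let $\Gamma_f=\{(x,f(x)) : x\in I\}$ denote the graph of $f$. Define the point set
\[
\mathcal P := (A-A)\times (f(A)-f(A)),
\]
and for $(u,v)\in A\times A$ define the curve
\[
\gamma_{u,v}:=\Gamma_f-(u,f(v)).
\]
These curves are pairwise distinct translates, so there are $|A|^2$ of them.

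\emph{Counting incidences.} For every $a\in A$ the point $(a-u,\,f(a)-f(v))$ lies on $\gamma_{u,v}$ and belongs to $\mathcal P$. Hence each curve carries at least $|A|$ points of $\mathcal P$, and the number of incidences satisfies
\[
I(\mathcal P,\{\gamma_{u,v}\})\ge |A|^3 .
\]

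\emph{The curves form a pseudoline-type system with two degrees of freedom.} This is the key geometric step, and both properties come from strict convexity (or concavity).
\begin{itemize}
\item Two distinct translates $\Gamma_f-(h,k)$ and $\Gamma_f-(h',k')$ meet in at most one point. For $h\neq h'$, intersections correspond to solutions of an equation of the form $f(x+h-h')-f(x)=\text{const}$, and the left-hand side is strictly monotone in $x$ by strict convexity. For $h=h'$ with $k\neq k'$ the translates are disjoint.
\item Through two distinct points $p,q$ passes at most one translate. Such a translate must contain two points of $\Gamma_f$ whose difference is $q-p$. If the horizontal gap is $w\neq 0$, then strict convexity makes the secant slope $\bigl(f(x+w)-f(x)\bigr)/w$ strictly monotone in $x$, so at most one position $x$ works, and this fixes the translate. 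If $w=0$, no translate works, since $\Gamma_f$ is a graph.
\end{itemize}

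\emph{Applying Szemerédi--Trotter.} The version for curves with two degrees of freedom and one crossing (Pach--Sharir), applied to these point/curve pairs, gives
\[
|A|^3\ll |\mathcal P|^{2/3}|A|^{4/3}+|\mathcal P|+|A|^2 .
\]
The last term is negligible once $|A|$ is large, and we may assume $|A|$ is large since the claim is trivial otherwise. The middle term $|\mathcal P|$ only helps: if it dominates, then $|\mathcal P|\gg |A|^3$. Otherwise the first term dominates, and $|\mathcal P|\gg |A|^{5/2}$. In either case
\[
|A-A|\,|f(A)-f(A)|\gg |A|^{5/2},
\]
so one of the two factors is at least $|A|^{5/4}$.

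\emph{Main obstacle.} The main obstacle is justifying the Szemerédi--Trotter bound for this curve family. I would do this by checking the two combinatorial properties above, after which the proof of the bound is the usual crossing-lemma argument. Care is needed with the restriction of curves to the interval $I$; this is harmless, because sub-arcs of the graph inherit both properties.
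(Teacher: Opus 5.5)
Your argument is correct and is essentially the original Elekes--Nathanson--Ruzsa incidence proof, which the paper cites rather than reproves. Translates of the graph of $f$ pairwise meet at most once, so a Szemer\'edi--Trotter bound on $(A-A)\times(f(A)-f(A))$ gives $|A-A|\,|f(A)-f(A)|\gg |A|^{5/2}$.

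One small slip: the curves $\gamma_{u,v}$ need not be pairwise distinct, because a strictly convex $f$ can be two-to-one on $A$ (e.g.\ $f(x)=x^2$ with $a,-a\in A$). Index the curves by $(u,w)\in A\times f(A)$ instead and use $|f(A)|\ge |A|/2$; this only changes the constants.
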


 We note that quantitative improvements to this result are known, with the current record lower bound due to Bloom \cite{Bl}. Using these improved estimates would yield a small improvement for the value of $c$ in the statement of Theorem \ref{thm:main}, but we do not pursue this, and any exponent strictly greater than $1$ is sufficient for our purposes. 

 The next result was implicit in the work of Konyagin, Passant and Rudnev \cite{KPR}.

 \begin{lemma} \label{lem:richcircles}
     Let $C$ be a circle with centre $c$. Let $P$ be a set of $n$ points on $C$, let $ p \in C \setminus P$ and let $ q \in \mathbb R^2 \setminus (C \cup\{c\}) $. Then
     \[
     \max \{| \mathcal A_p(P)|,  \mathcal |\cA_q(P)|\} \gg n^{5/4}.
     \]
 \end{lemma}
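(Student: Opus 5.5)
Parametrise the circle $C$ by angle. Write points of $P$ as $c+Re^{i\theta}$ with $\theta\in A\subset[0,2\pi)$, $|A|=n$. Since $p\in C$, the inscribed angle theorem gives that the angle at $p$ subtended by $x,y\in P$ equals half the central angle, namely $|\theta_x-\theta_y|/2$ or $\pi-|\theta_x-\theta_y|/2$, depending on which arc $p$ lies on. Place $p$ at angle $0$, so that $A\subset(0,2\pi)$ and every pair lies on one arc relative to $p$. Then $\cA_p(P)$ contains $\{|a-a'|/2 : a,a'\in A\}$, and therefore
\[
|\cA_p(P)| \gg |A-A|.
\]

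For $q$, rotate coordinates so that $q$ lies on the ray from $c$ at angle $0$, at distance $d\neq 0,R$ from $c$. The direction angle from $q$ to $c+Re^{i\theta}$ is $g(\theta)=\arg(Re^{i\theta}-d)$. Angles pinned at $q$ are differences $|g(a)-g(a')|$, taken modulo the usual reflection to $[0,\pi]$. This reduction loses only a constant factor, as in the lemma relating $D_p$ ratios to angles. So $|\cA_q(P)|\gg |g(A)-g(A)|$, up to the ambiguity of angles $\ge\pi$, which is handled by counting each angle at most twice.

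To apply \Cref{thm:ENR}, I need $g$ to be strictly convex or concave on an interval. I would compute $g'(\theta)=\frac{R^2-Rd\cos\theta}{R^2+d^2-2Rd\cos\theta}$ and $g''$. The numerator of $g''$ is a multiple of $\sin\theta$ times a nonzero factor, and that factor is nonzero because $d\neq R$ and $d\neq 0$. So $g''$ vanishes only at $\theta=0,\pi$, and $g$ is strictly convex or concave on each of $(0,\pi)$ and $(\pi,2\pi)$. By pigeonholing, one of these half-circles contains $\gg n$ points of $A$; restrict $A$ to it. Note that $g$ is smooth there: the point $q$ is not on $C$, so the direction is well defined, and the branch of $\arg$ can be chosen continuously. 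The case $q$ inside the circle and the case $q$ outside are treated together by the same formula.

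Applying \Cref{thm:ENR} to the restricted $A$ with $f=g$ gives $\max\{|A-A|,|g(A)-g(A)|\}\gg n^{5/4}$, and hence the lemma. The main obstacle is the bookkeeping in the step from differences of direction angles to actual angles in $[0,\pi]$. Each difference $\delta$ corresponds to the angle $\delta$ or $2\pi-\delta$, so the map from differences to angles is at most $2$-to-$1$ and costs only a constant. A second check is that the $g''$ computation indeed yields a single sign on each half-circle. If the sign analysis fails, I would instead note that $g''$ is a nonzero trigonometric rational function with $O(1)$ zeros, split the circle into $O(1)$ intervals of constant sign, and pigeonhole.
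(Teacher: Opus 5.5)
Your proposal is correct and follows essentially the paper's route: the inscribed angle theorem turns $\cA_p(P)$ into the difference set of the angular parameters (the paper's identity $d_p=-x$), the direction map from $q$ is strictly convex or concave on $O(1)$ intervals, and \Cref{thm:ENR} finishes. Placing $q$ on the real axis gives you the tidier formula $g''(\theta)=Rd(d^2-R^2)\sin\theta/(R^2+d^2-2Rd\cos\theta)^2$ and replaces the paper's quadrant decomposition by a single split along the line through $c$ and $q$; the one slip, placing both $p$ and $q$ at angle $0$, is harmless, since in any rotated frame each $\alpha\in\cA_p(P)$ arises only from differences $|a-a'|\in\{2\alpha,2\pi-2\alpha\}$, so $|\cA_p(P)|\gg|A-A|$ still holds for the restricted set.
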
  

 \begin{proof}
   After a similarity transformation, we may take $C$ to be the unit circle and $p=(1,0)$. 
   
   For $r,s \in \mathbb R^2$, let $d_r(s)$ denote the oriented angle determined by $r$ and $s$ with respect to the $x$-axis. In complex notation, $d_r(s)$ denotes the argument of $s-r$.

   It will be convenient to pass to a subset of $P$ so that all the points are in the same quadrant with respect to the central point $q$. For this, we can decompose $C$ into four disjoint sets $C_1,C_2,C_3$ and $C_4$ whereby
\begin{align*}
    C_1 & = \{ s \in  C : d_q(s) \in [0, \pi/2) \},
\\ C_2 & = \{ s \in  C : d_q(s) \in [\pi/2, \pi) \},
\\ C_3 & = \{ s \in  C : d_q(s) \in [\pi, 3\pi/2) \},
\\ C_4 & = \{ s \in  C : d_q(s) \in [3\pi/2, 2\pi) \}.
\end{align*}
At least one of these sets contains a positive proportion of the elements of $P$. For simplicity, let us assume that this is the case for $C_1$, and let $P_1:= P \cap C_1$, so that $|P_1| \gg |P|$. The other three cases can be handled with some minor adaptations of the forthcoming proof. We parameterise the set $C_1$ so that
\[
   C_1= \{ (-\cos (2x), \sin (2x)) : x \in J \},
   \]
   for some set $J \subset (-\pi/2,\pi/2)$. It is possible that $J$ is not an interval (depending on the position of $q$, it may be the union of two disjoint intervals). For technical reasons, we want to restrict the domain to an interval, and so we pass to a further subset
   \[
   P_2= \{ (-\cos (2x), \sin (2x)) : x \in X \} \subseteq P_1
   \]
   where $X \subseteq I \subseteq J$ is a set with cardinality $\Omega(n)$ and $I$ is an interval.  Define
   \[
   \mathcal D_p(P_2):=\{ d_p(s) : s \in P_2 \}.
   \]
   This is essentially the same as the set $D_p(P_2)$ defined in Section \ref{sec:freiman}, up to a factor of $2$, but we use here real rather than complex notation. In particular, an important fact for us is that
   \[
   \cA_p(P_2)=(\cD_p(P_2) -\cD_p(P_2)) \cap(0, \infty),
   \]
   and in particular $|\cA_p(P_2)| \gg |\cD_p(P_2)- \cD_p(P_2)|$.
   For $x \in (-\pi/2,\pi/2)$, we have
   \[
   d_p(-\cos(2x), \sin(2x))=\arctan \left ( \frac{-\sin(2x)}{1+\cos (2x)} \right ) = \arctan \left ( \frac{-2\sin x\cos x}{2\cos^2 x} \right )= \arctan (- \tan x)=-x
   \]
   and it therefore follows that
   \[
   \cA_p(P_2) \gg |\cD_p(P_2) - \cD_p(P_2)| =  |X-X| .
   \]
   
   Next, consider the direction set $\cD_q(P_2)$. We write $q=(a,b)$, and by the assumptions of the statement we know that $a^2+b^2 \neq 1$ and at least one of $a$ and $b$ is non-zero. Since all elements of $P_2$ are above and to the right of $q$, it follows that
   \[
   \cD_q(P_2)= \{d_q( -\cos(2x),\sin(2x)) : x \in X\} = \left \{ \arctan \left ( \frac{b-\sin(2x)}{a+\cos(2x)}\right ) : x \in X \right \} .
   \]
   Define a function $f:I \rightarrow \mathbb R$ by the formula
   \[
   f(x):= \arctan \left ( \frac{b-\sin(2x)}{a+\cos(2x)}\right ).
   \]
   We claim that the second derivative $f''$ has at most two zeroes in the interval $(-\pi/2,\pi/2)$. Once this claim is established, it follows that there is a subinterval $I' \subset I$ such that $f$ is strictly convex or concave on $I'$ and $\Omega(n)$ elements of $X$ belong to $I'$. Let $X':= X \cap I'$ and let $P_3 \subseteq P_2$ be the set of points
   \[
   P_3:=  \{ (-\cos (2x), \sin (2x)) : x \in X' \}.
   \]
   Then
   \begin{align*}
    \max \{| \mathcal A_p(P)|,  \mathcal |\cA_q(P)|\} &\geq  \max \{| \mathcal A_p(P_3)|,  \mathcal |\cA_q(P_3)|\} 
    \\ & \gg \max \{ |X'-X'|, |f(X')-f(X')| \} 
    \\ &\gg |X'|^{5/4} \gg n^{5/4},
    \end{align*}
  where the penultimate inequality uses the strict convexity/concavity of $f$ and the statement of \Cref{thm:ENR}. This proves the lemma, except that it remains to verify the claim that $f''$ has at most two zeroes. A direct calculation gives
  \[
  f'(x)= \frac{2(b \sin (2x)-a \cos(2x) -1)}{(a+\cos(2x))^2+(b-\sin(2x))^2}.
  \]
  Note that the denominator above is non-zero by the assumption that $(a,b)$ is not on the unit circle. The second derivative is then given by
  \[
  f''(x)= \frac{4(a^2+b^2-1)(a \sin(2x) +b \cos(2x))}{[(a+\cos(2x))^2+(b-\sin(2x))^2]^2}.
  \]
  Since $(a,b)$ is not on the unit circle, the first factor of the numerator is a non-zero constant. The second factor $a \sin(2x) +b \cos(2x)$ has at most two zeroes in $(-\pi/2,\pi/2)$. Indeed, if $a=0$ the zeroes are precisely $x= \pm \pi/4$. If $a \neq 0$ then this rearranges to the form $\tan (2x)=-b/a$, which has two solutions in $(-\pi/2, \pi/2)$.
 \end{proof}

 The case of rich lines is handled similarly, although the analysis turns out to be a little easier.

 \begin{lemma} \label{lem:richlines}
     Let $P$ be a set of $n$ points on a line $\ell$ and let $ p, q \in \mathbb R^2 \setminus \ell $ such that $p$ and $q$ are distinct and not symmetric with respect to $\ell$. Then
     \[
     \max \{| \mathcal A_p(P)|,  |\mathcal A_q(P)|\} \gg n^{5/4}.
     \]
 \end{lemma}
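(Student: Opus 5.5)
After a rigid motion and a scaling, I would take $\ell$ to be the $x$-axis and $p=(0,1)$, and write $q=(a,b)$ with $b\neq 0$. The hypothesis that $p$ and $q$ are distinct and not symmetric in $\ell$ then says $(a,b)\neq(0,1)$ and $(a,b)\neq(0,-1)$. Write $P=\{(x,0):x\in X\}$ with $|X|=n$. The plan mirrors \Cref{lem:richcircles}: parametrise $P$ by the directions seen from $p$, so that $\cA_p(P)$ is controlled by a difference set $Y-Y$; express the directions seen from $q$ as $f(Y)$ for an explicit function $f$; and apply \Cref{thm:ENR} on an interval where $f$ is strictly convex or concave.

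Concretely, from $p$ the point $(x,0)$ is seen in direction $-\pi/2+\theta$, where $x=\tan\theta$ and $\theta\in(-\pi/2,\pi/2)$. All points of $P$ lie in the open lower half-plane of directions from $p$, so every pinned angle is a difference of these directions. Set $Y=\{\arctan x : x\in X\}$. Then $\cA_p(P)\supseteq (Y-Y)\cap(0,\infty)$, which gives $|\cA_p(P)|\gg|Y-Y|$. From $q$, all points of $\ell$ lie on one side of $q$, since $b\neq0$. Hence the direction from $q$ to $(x,0)$ is a continuous function of $x$ with range inside an interval of length $\pi$, and pinned angles at $q$ are again differences. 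This gives $|\cA_q(P)|\gg |f(Y)-f(Y)|$ with
\[
f(\theta)=\arctan\left(\frac{\tan\theta - a}{-b}\right),
\]
up to adding a constant and choosing the branch consistently. Because the range has length less than $\pi$, no wrap-around occurs.

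The key computation is $f''$. With $u=\tan\theta$ and $g(u)=\arctan((u-a)/(-b))$ we have $g'(u)=-b/(b^2+(u-a)^2)$, and $f'(\theta)=g'(\tan\theta)(1+\tan^2\theta)$. So
\[
f'(\theta)=\frac{-b(1+u^2)}{b^2+(u-a)^2},
\]
which is a Möbius-type ratio of quadratics in $u$. Differentiating once more, $f''(\theta)$ equals $(1+u^2)$ times the $u$-derivative of $f'$. The numerator of that derivative is, up to a sign and a factor of $2b$,
\[
u\bigl(b^2+(u-a)^2\bigr)-(1+u^2)(u-a)=a u^2+(b^2+a^2-1)u + a.
\]
This polynomial vanishes identically only when $a=0$ and $b^2=1$, that is, exactly when $q=p$ or $q$ is the reflection of $p$. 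Those are precisely the excluded configurations, and geometrically they are the cases where the angles seen from $q$ are affinely related to those seen from $p$. In every other case $f''$ has at most two zeros in $(-\pi/2,\pi/2)$.

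It follows that $(-\pi/2,\pi/2)$ splits into at most three intervals on each of which $f$ is strictly convex or concave, and by pigeonhole one of them, $I'$, contains $Y'=Y\cap I'$ with $|Y'|\gg n$. Applying \Cref{thm:ENR} to $Y'$ and $f|_{I'}$ gives
\[
\max\{|\cA_p(P)|,|\cA_q(P)|\}\gg\max\{|Y'-Y'|,|f(Y')-f(Y')|\}\gg n^{5/4}.
\]
I expect the main obstacle to be the derivative computation: getting the branch bookkeeping right, and verifying that the numerator degenerates exactly at the excluded points and nowhere else.
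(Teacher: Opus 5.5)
Your proposal is correct and follows the paper's proof essentially line for line: the same normalisation $p=(0,1)$, $q=(a,b)$, the set $Y=\arctan X$, the same function ($f=g$), at most two zeros of the second derivative, then \Cref{thm:ENR}; your substitution $u=\tan\theta$ is just the paper's $\sin 2t,\cos 2t$ computation in another coordinate. One small algebra slip: $u\bigl(b^2+(u-a)^2\bigr)-(1+u^2)(u-a)$ actually equals $-au^2+(a^2+b^2-1)u+a$ rather than $au^2+(a^2+b^2-1)u+a$. This is harmless, because you only use that this quadratic in $u$ vanishes identically exactly when $(a,b)=(0,\pm 1)$, and that is still true.
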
  

 \begin{proof}
       After a similarity transformation, we may assume without loss of generality that $\ell$ is the $x$-axis, $p=(0,1)$ and $q =(a,b)$, where $b \neq 0$. Write
     \[
     P=\{(x,0) : x \in X\}
     \]
     and define
     \[
     A= \arctan X \subseteq (-\pi/2, \pi/2).
     \]
     Then
     \[
      \mathcal A_p(P)= (A-A) \cap (0, \pi)]
     \]
     and in particular
     \[
     |\cA_p(P)| \geq \frac{|A-A|-1}{2} \gg |A-A|.
     \]
     Similarly,
     \[
      \mathcal A_q(P)= \left \{ \arctan \left (\frac{a-x}{b} \right ) - \arctan \left (\frac{a-x'}{b} \right ): x,x' \in X \right \} \cap(0, \pi).
     \]
     Define $g: (-\pi/2, \pi/2) \rightarrow \mathbb R$ by the formula
     \[
     g(t):= \arctan \left (\frac{a-\tan(t)}{b} \right ).
     \]
     Then
     \[
     \mathcal A_q(P)=(g(A)-g(A)) \cap(0,\pi)
     \]
     and so
     \[
      |\cA_q(P)|  \gg |g(A)-g(A)|.
     \]
     A direct calculation shows that $g''(t)=0$ has at most $2$ solutions with $t$ belonging to the interval  $ (-\pi/2, \pi/2)$. Indeed,
     \[
     g'(t)=-\frac{b}{(b\cos t)^2 +(a \cos t -\sin t)^2}
     \]
     and 
     \begin{align*}
     g''(t)&= \frac{2b[(1-a^2-b^2) \cos t\sin t-a(\cos^2t -\sin^2 t )]}{[(b\cos t)^2 +(a \cos t -\sin t)^2]^2}
     \\& =\frac{2b[\frac{1}{2} (1-a^2-b^2)\sin (2t) -a\cos(2t) ]}{[(b\cos t)^2 +(a \cos t -\sin t)^2]^2},
     \end{align*}
     where the last equality follows from the double angle formula. Note that $g'$ and $g''$ are well-defined; this follows from the assumption that $b \neq 0$. Any solution to $g''(t)=0$ must satisfy
     \begin{equation} \label{tanred}
     \frac{1}{2} (1-a^2-b^2) \sin (2t) =a\cos(2t).
     \end{equation}
     We can rule out the case that both coefficients $\frac{1}{2} (1-a^2-b^2)$ and $a$ are zero, since this implies $(a,b)=(0,1)$ (which is excluded by the assumption that $p$ and $q$ are distinct) or $(a,b)=(0,-1)$ (which is excluded by the assumption that $p$ and $q$ are not symmetric with respect to $\ell$). If $\frac{1}{2} (1-a^2-b^2)=0$ then \eqref{tanred} has $2$ zeroes in the given range. Otherwise, \eqref{tanred} can be rearranged as 
     \[
     \tan(2t)= \frac{2a}{1-a^2-b^2},
     \]
     which has at most two zeroes in the given range.
      
     In particular, there exists an interval $I$ such that $g$ is strictly convex or concave on $I$ and $A' := A \cap I$ satisfies $|A'| \gg |A|$. \Cref{thm:ENR} then implies that
     \[
     \max \{| \mathcal A_p(P)|,  \mathcal A_q(P)|\} \gg \max \{ |A'-A'|, |g(A')-g(A')| \} \gg n^{5/4}. 
     \]
    
 \end{proof}

\section{Proof of main result}

We are now ready to prove \Cref{thm:main}.

\begin{proof}[Proof of \Cref{thm:main}]
Let $n$ be sufficiently large and let $P\subseteq \mathbb R^2$ be a point set which does not take one of the five forbidden forms in the statement of \Cref{thm:main}. We will show that there exists $q \in P$ such that $|\mathcal A_q(P)| \geq n^{1+c}$ for some absolute constant $c>0$.

Suppose first that $P$ contains at most $n^{9/10}$ points on a line or circle. Apply \Cref{thm:mainnearly} with $\epsilon=1/10$. Then there exists $q \in P$ such that $|\mathcal A_q(P)| \geq n^{1+c}$ for some $c>0$. This value of $c$ is now fixed for the rest of the proof. We can assume that $c < 1/10$; the reality is much worse.

Suppose now that $P$ contains at least $n^{9/10}$ points on a circle $C$ with centre $c$. Since $P$ does not take the fourth or fifth forbidden form, there exists $q \in P \setminus (C \cup \{c\})$. Let $p$ be an arbitrary point of $P \cap C$ and set $P'=P \setminus \{p\}$. Then \Cref{lem:richcircles} implies that
\[
 \max \{| \mathcal A_p(P' \cap C)|,  |\mathcal A_q(P' \cap C)|\} \gg |P' \cap C|^{5/4} \gg n^{9/8} \geq n^{1+c},
\]
where the last inequality is valid provided that $n$ is sufficiently large. 

The case when $P$ contains at least $n^{9/10}$ points on a line $\ell$ is handled similarly. Since $P$ does not take any of the first three forbidden forms, there exist two points $p,q \in P \setminus \ell$ such that $p$ and $q$ are not symmetric with respect to $\ell$. \Cref{lem:richlines} then gives
\[
 \max \{| \mathcal A_p(P \cap \ell)|,  |\mathcal A_q(P \cap \ell)|\} \gg |P \cap \ell|^{5/4} \gg n^{9/8} \geq n^{1+c},
\]
which concludes the proof of \Cref{thm:main}.
    
\end{proof}

\section{Classifying point sets defining minimal angles}
In this section we use \Cref{thm:main} to prove \Cref{thm:classification}. The statement of \Cref{thm:classification} is repeated below for the convenience of the reader.

\begin{theorem}\label{thm:classificationagain}
    For $n$ sufficiently large, every non-collinear point set $P \subseteq \mathbb R^2$ of size $n$ determines at least $n-2$ distinct angles. Furthermore, the only point configuration of $n$ non-collinear points which achieves exactly $n-2$ distinct angles is the regular $n$-gon.
\end{theorem}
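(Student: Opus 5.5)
We will use \Cref{thm:main} to reduce to the five exceptional configurations, and then count angles in each by hand. Let $P$ be non-collinear with $|P|=n$, $n$ large. If some $q\in P$ has $|\cA_q(P)|\geq n^{1+c}$, then $|\cA(P)|\geq n^{1+c}>n-2$ and the point set is far from extremal. Otherwise $P$ is of one of the forms (2)--(5) of \Cref{thm:main}; form (1) is excluded by non-collinearity.

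For the line forms (2) and (3), let $\ell$ contain $m\in\{n-1,n-2\}$ points $x_1<\dots<x_m$ of $P$, and let $p\notin\ell$ be an exceptional point. The angles $\cA(x_1,p,x_j)$ for $j=2,\dots,m$ are strictly increasing in $j$, which gives $m-1$ distinct angles in $(0,\pi)$. We will add to these the angle $\pi$, which occurs as $\cA(x_1,x_2,x_3)$, and the angle $0$, which occurs as $\cA(x_2,x_1,x_3)$. For $m=n-1$ this already gives $n$ angles. For $m=n-2$ it gives $n-1$ angles, provided $0$ and $\pi$ are genuinely new. They are new, since the angles at $p$ lie strictly in $(0,\pi)$. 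So every line configuration has at least $n-1>n-2$ angles, and none of them is extremal.

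For the circle forms (4) and (5), the main tool is the inscribed angle theorem. Suppose $P$ contains $m\geq n-1$ points on a circle $C$. Writing their arc positions as $\theta_1<\dots<\theta_m$, we have $\cA(p_i,p_k,p_j)=\tfrac12\,\mathrm{arc}(p_ip_j)$, where the arc is the one not containing $p_k$. The set $\{\cA(p_1,p_k,p_j)\}$ therefore contains the half-arcs $\tfrac12(\theta_j-\theta_1)$ for $2\le j\le m-1$, taking $p_k=p_m$, together with the complementary half-arcs. The plan is to show directly that the values $\tfrac12(\theta_j-\theta_1)$ for $j=2,\dots,m$ and $\pi-\tfrac12(\theta_j-\theta_1)$ already give at least $m-2$ distinct angles. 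Equivalently, we will use the elementary bound that the set of inscribed angles on $m$ concyclic points is at least the number of distinct nonzero differences in $\{\theta_j\}$ modulo $2\pi$, halved, capped appropriately. The key claim is that $m$ points on a circle determine at least $m-2$ distinct inscribed angles, with equality if and only if the points form a regular $m$-gon. To prove it, we will fix $p_1$ and observe that the $m-1$ arcs from $p_1$ to $p_2,\dots,p_m$ are distinct; each has its inscribed angle realised at some third vertex, except possibly at $p_2$ and $p_m$, since the vertex must lie on the complementary arc. This gives $m-2$ distinct angles $\tfrac12(\theta_j-\theta_1)$ for $3\le j\le m$ together with the complementary values. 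For equality, all angles determined by every choice of base point must lie in this same set of size $m-2$. Running this over all base points $p_i$ forces the gaps $\theta_{i+1}-\theta_i$ to be equal, and hence forces a regular polygon. In form (5), where the centre $c$ is added, $c$ contributes central angles $\theta_j-\theta_i$, which are twice the inscribed angles. For $n$ large this forces at least one new angle, for instance the largest central angle below $\pi$ against the inscribed set, and pushes the count to at least $n-2$. We will then need to check that equality fails, by comparing the regular $(n-1)$-gon plus centre, which has about $n-2$ angles, by explicit computation.

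The main obstacle is the rigidity step: showing that $n$ concyclic points with exactly $n-2$ inscribed angles must be equally spaced. The sharper form (5) is part of the same difficulty, since for $m=n-1$ concyclic points plus the centre we must get strictly more than $n-2$ angles, and the centre may create no new angles if the polygon is regular. We will handle this with a direct count of the regular $(n-1)$-gon plus centre. It yields the angles $k\pi/(n-1)$ for $0\le k\le n-1$, together with the angles at the points, including the base angles of isosceles triangles through the centre. These base angles are the values $\pi/2-k\pi/(n-1)$, which are new when $n-1$ is odd. The case of $n-1$ even, where the count may coincide with the regular polygon's, requires checking the angle $0$ or other collinear angles through the centre. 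For the general rigidity step we will first try an additive-combinatorial argument. Let $S=\{\theta_j\}\subset\mathbb R/2\pi\mathbb Z$. The inscribed angle set is essentially $\tfrac12\bigl((S-S)\setminus\{0\}\bigr)$ folded into $(0,\pi)$. Then $|S-S|\le 2m-3$ or so, and by the Cauchy--Davenport/Vosper type structure in the circle group (Kneser's theorem) this forces $S$ to be an arithmetic progression in a finite cyclic subgroup, that is, a regular polygon.
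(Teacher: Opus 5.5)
Your reduction via Theorem~\ref{thm:main}, your treatment of the line forms, and the bound of $m-2$ angles for $m$ concyclic points are all fine. The rigidity step for form (4) is also easier than you fear. If $|\cA(P)|=n-2$, then the $n-2$ angles $\tfrac12(\theta_j-\theta_i)$ seen at the vertex $p_{i-1}$, with one arm through $p_i$ (differences taken in $(0,2\pi)$), exhaust $\cA(P)$. Their minimum $\tfrac12(\theta_{i+1}-\theta_i)$ is therefore independent of $i$, which forces equal gaps. Kneser is not needed, and it would not give the result from $|S-S|$ alone: $S=H\cup(H+x)$, with $H$ a finite subgroup of $\mathbb R/2\pi\mathbb Z$ and $x$ of infinite order, has $|S-S|=\tfrac32|S|$ but is not a regular polygon.

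The genuine gap is form (5). Everything there rests on the claim that the centre $c$ always creates an angle outside the inscribed set of the $n-1$ concyclic points, and your witness for it fails. Take arc positions $0,\epsilon,2\epsilon,\pi+3\epsilon$, together with, if you like, many further points just after $\pi+3\epsilon$. The largest central angle below $\pi$ is $\pi-\epsilon$, and this is already the inscribed angle at the point $\epsilon$ subtending the arc from $2\epsilon$ round to $0$. So as written, even $|\cA(P)|\ge n-2$ is unproved in form (5). More seriously, strict inequality is unproved. Your explicit check of the regular $(n-1)$-gon plus centre only covers the subcase where the concyclic points have exactly $n-3$ inscribed angles. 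The subcase where they are not regular, already determine $n-2$ inscribed angles, and $c$ contributes nothing new is never addressed. Yet this is exactly what must be excluded for the uniqueness statement.

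The missing idea is to build $c$ into the rigid structure from the start. The paper first disposes of antipodal pairs, which produce both $0$ and $\pi$ and hence at least $n-1$ angles. It then pins at a circle point $p$. The $n-1$ directions from $p$ to $P\setminus\{p\}$, including the direction to $c$, are distinct and lie in the open half-plane beyond the tangent at $p$. So $p$ alone sees at least $n-2$ angles, which also repairs your lower bound. Equality holds only if the angular offsets from the neighbour $q$ form an arithmetic progression $\{i\theta:0\le i\le n-2\}$. The isosceles triangle $pcp_m$, with $p_m$ the circle point nearest the antipode of $p$, then forces $\pi=(k+2)\theta$ with $k\in\{n-3,n-2\}$. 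Finally, the tangent--chord and inscribed angle theorems yield one of three outcomes:
\begin{itemize}
\item a new angle, $2\phi_1$ or $\theta/2$, where $\phi_1$ is the angle between the tangent and $pq$;
\item an inscribed right angle, and hence an antipodal pair;
\item a parity contradiction from the reflection symmetry of $P$ in the line $pc$.
\end{itemize}
Some argument of this kind, tying the position of $c$ to a pinned arithmetic progression, is what your proposal lacks.
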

\begin{proof}
    Firstly, assume that $P$ is not of one of the five forbidden forms in the statement of \Cref{thm:main}.
     Then, applying \Cref{thm:main}, the point set $P$ determines at least $n^{1+c}$ angles - in this case we are done. Since we assume that $P$ is non-collinear, the four cases remaining are:

    \begin{enumerate}
        \item $n-2$ collinear points, with the two remaining points off the rich line.
        \item $n-1$ collinear points, with the final point off the rich line.
        \item $n-1$ points co-circular, with the final point being the centre of the circle.
        \item All points of $P$ are co-circular.
    \end{enumerate}

    It is not too hard to see that any set of $k$ points on a circle $C$ must always determine at least $k-2$ distinct angles, by fixing two points $p,q$ which are adjacent on the circle and considering the angles $\cA(p,q,p')$ with $p' \in C \setminus \{p,q\}$. Similarly, any set of $k$ points on a line $\ell$ and a single point $q$ not lying on $\ell$ must define $k+1$ distinct angles, since fixing an extremal point $p \in \ell$ and considering the angles $A(p,q,p')$ as $p'$ passes along the line gives $k-1$ distinct angles, but we also get the angles $0$ and $\pi$ by taking three points on $\ell$. Therefore we have at least $k+1$ distinct angles. These two observations immediately deal with cases $(1),(2)$ and $(4)$, and furthermore show that cases $(1)$ and $(2)$ determine strictly more than $n-2$ angles.

    We now deal with case $(3)$. Let $C$ denote the rich circle and let $c$ denote its centre. Fix two adjacent points $q,p$ on $C$ and consider the angles $\cA(p,q,p')$ for $p' \in (P \cap C)\setminus \{p,q\}$. These points lying on the circle yield at least $n-3$ distinct angles of this form, and this does not include either of the angles $0$ or $\pi$. Now assume that $P$ contains two antipodal points $a_1,a_2$ of the circle - in this case, the triple $a_1,c,a_2$ is collinear, and the two angles $0$ and $\pi$ have been defined giving at least $n-1$ distinct angles. We can therefore henceforth assume that no two points of $P$ lying on the circle are antipodal. 
    
    Fix a point $p \in P \cap C$, and consider the $n-1$ distinct line segments connecting $p$ with the other points of $P$. Letting $q$ be an adjacent point to $p$ on the circle, these line segments give rise to $n-2$ angles of the form $\cA(q,p,p')$ for $p' \in P \setminus \{p,q\}$. Let $B$ denote the set of these angles, union with $\{0\}$. The set of angles of the form $\cA(r,p,s)$ for $r,s \in P \setminus \{p\}$ is the positive part of the difference set $B-B$. Since $B$ has size $n-1$, we are done unless $B$ forms an arithmetic progression - indeed, we have
    $$|(B-B) \cap \mathbb R^+| = \frac{1}{2}\left(|B-B| - 1 \right) \geq \frac{1}{2}(2n-4) = n-2,$$
    and equality holds if and only if $B$ is an arithmetic progression. We then write $B$ as
    $$B = \{ i \theta : 0 \leq i \leq n-2 \},$$
    and we label the points of $(P \cap C) \setminus \{p,q\}$ in cyclic order as $p_1, \dots, p_{n-3}$.
    So far, we have found $n-2$ distinct angles defined by $P$ - we are now searching for one more. Let $p_m$ denote one of the two points of $P \cap C$ closest to the antipodal point of $p$. The triangle $pcp_m$ is an isosceles triangle with two occurrences of the angle $\theta$, and a central angle of $\pi- 2\theta$. If this angle of $\pi - 2\theta$ does not occur in the positive difference set of $B$, we are done as we have found a new angle. Therefore we assume it does occur in the positive difference set of $B$ - that is, 
    $$\pi - 2\theta = k \theta, \quad k \in \{1,...,n-2\}.$$
    We then have $\pi = (k+2) \theta$, however if we combine this with the knowledge that all angles in $\cA_p(P)$ are less than $\pi$, we have
    $$(n-2)\theta < \pi = (k+2)\theta \implies k \geq n-3.$$
    Therefore the only options are $k=n-3,n-2$. The situation so far is depicted in the figure below.
\begin{figure}[h!]
\centering
\begin{tikzpicture}
\coordinate (p) at (-4,0);
\coordinate (q) at (-3.5,1.95);
\coordinate (p1) at (-1.8,3.55);
\coordinate (p2) at (0,4);
\coordinate (t1) at (-4,1);
\coordinate (t2) at (-4,-1);
\coordinate (r) at (0,0);
\coordinate (pm) at (3.8,1.2);
\coordinate (p_{n-1}) at (-3.5,-1.95);

 \filldraw[black] (1.2,3) circle (1pt);
  \filldraw[black] (0.8,3.2) circle (1pt);
   \filldraw[black] (0.4,3.4) circle (1pt);

    \draw (0,0) circle (4);
    \draw (-4,-4) -- (-4,4);
    \draw (p) -- (-3.5,1.95);
    \draw (p) -- (p1);
    \draw (p) -- (p_{n-1});
    \draw (p) -- (p2);
    \draw (p) -- (r);
    \draw (p) -- (pm);
    \draw (r) -- (pm);
    \filldraw[black] (p) circle (1pt) node[left] {$p$};
    \filldraw[black] (p_{n-1}) circle (1pt) node[right] {$p_{n-3}$};
    \filldraw[black] (p1) circle (1pt) node[left] {$p_1$};
    \filldraw[black] (p2) circle (1pt) node[above] {$p_2$};
    \filldraw[black] (-3.5,1.95) circle (1pt) node[right] {$q$};
    \filldraw[black] (r) circle (1pt) node[below] {$c$};
    \filldraw[black] (pm) circle (1pt) node[right] {$p_m$};
    \pic[
    draw,
    "$\phi_1$",
    angle radius=2cm,
    angle eccentricity=1.2
] {angle=q--p--t1};
\pic[
    draw,
    "$\phi_2$",
    angle radius=2cm,
    angle eccentricity=1.2
] {angle=t2--p--p_{n-1}};
\pic[
    draw,
    "$\theta$",
    angle radius=0.6cm,
    angle eccentricity=2
] {angle=p1--p--q};
\pic[
    draw,
    "$\theta$",
    angle radius=0.6cm,
    angle eccentricity=2
] {angle=p2--p--p1};
\pic[
    draw,
    "$\theta$",
    angle radius=1cm,
    angle eccentricity=2
] {angle=r--p--pm};
\pic[
    draw,
    "$\pi - 2\theta$",
    angle radius=0.2cm,
    angle eccentricity=2
] {angle=pm--r--p};
\pic[
    draw,
    "$\theta$",
    angle radius=1cm,
    angle eccentricity=2
] {angle=p--pm--r};
    
\end{tikzpicture}
\end{figure}
\textbf{Case 1 - $(n-1)\theta = \pi$.} In this case, we have that the two tangent angles $\phi_1$ and $\phi_2$ sum to $\theta$; indeed, summing all of the angles defined on the right side of the tangent line at $p$, we get 
$$\phi_1 + (n-2)\theta + \phi_2 = \pi = (n-1)\theta \implies \phi_1 + \phi_2 = \theta.$$
 In particular, both $\phi_1,\phi_2$ are strictly less than $\theta$. By the inscribed angle theorem (see the left image below), the angle $A(p,c,q)$ is equal to $2\phi_1$. Therefore we find a new angle, and are done, unless $\phi_1 = \frac{\theta}{2}$. But if this happens, then by a second inscribed angle theorem (see the right image below), we have that $A(p,p_1,c) = \frac{\theta}{2}$, and we have again found a new angle. In either case, we are done. The two inscribed angle theorems we have used are depicted below.

\begin{minipage}{0.4 \linewidth}
\centering
\begin{tikzpicture}[scale=1]
\coordinate (p) at (-2,0);
\coordinate (q) at (-3.5,1.95);
\coordinate (p1) at (-1,1.75);
\coordinate (p2) at (0,4);
\coordinate (t1) at (-2,1);
\coordinate (t2) at (-4,-1);
\coordinate (O) at (0,0);
\coordinate (pm) at (3.8,1.2);
\coordinate (p_{n-1}) at (-3.5,-1.95);

    \draw (0,0) circle (2);
    \draw (-2,-2) -- (-2,2);
    \draw (p) -- (r);
    \draw (p) -- (p1);
    \draw (p1) -- (r);
    \filldraw[black] (O) circle (1pt) node[right] {$c$};
    \filldraw[black] (p) circle (1pt) node[left] {$p$};
    \filldraw[black] (p1) circle (1pt) node[above] {$q$};
    \pic[
    draw,
    "$\phi_1$",
    angle radius=1.5cm,
    angle eccentricity=1.2
] {angle=p1--p--t1};
\pic[
    draw,
    "$2 \phi_1$",
    angle radius=0.4cm,
    angle eccentricity=2
] {angle=p1--O--p};
    
\end{tikzpicture}

Angle Theorem 1
\end{minipage}
\begin{minipage}{0.5 \linewidth}
\centering
\begin{tikzpicture}[scale=1]
\coordinate (p) at (-2,0);
\coordinate (q) at (-1,1.75);
\coordinate (p1) at (0.8,1.84);
\coordinate (p2) at (0,4);
\coordinate (t1) at (-2,1);
\coordinate (t2) at (-4,-1);
\coordinate (O) at (0,0);
\coordinate (pm) at (3.8,1.2);
\coordinate (p_{n-1}) at (-3.5,-1.95);

    \draw (0,0) circle (2);
    \draw (p) -- (r);
    \draw (q) -- (r);
    \draw (p) -- (p1);
    \draw (p) -- (q);
    \draw (p1) -- (q);
    \filldraw[black] (O) circle (1pt) node[right] {$c$};
    \filldraw[black] (p) circle (1pt) node[left] {$p$};
    \filldraw[black] (q) circle (1pt) node[above] {$q$};
    \filldraw[black] (p1) circle (1pt) node[above] {$p_1$};
\pic[
    draw,
    "$\theta$",
    angle radius=0.4cm,
    angle eccentricity=2
] {angle=q--O--p};
\pic[
    draw,
    "$\frac{\theta}{2}$",
    angle radius=0.6cm,
    angle eccentricity=1.6
] {angle=q--p1--p};
\end{tikzpicture}

Angle Theorem 2
\end{minipage}

\textbf{Case 2 - $n\theta = \pi$.} In this case, we can repeat the same argument above summing the angles on the right side of the tangent line at $p$ to obtain
$$\phi_1 + (n-2)\theta + \phi_2 = \pi = n \theta \implies \phi_1 + \phi_2 = 2\theta.$$
Assume WLOG that $\phi_1 \leq \theta$. We repeat the rest of the argument above - from the first inscribed angle theorem, we must have that $2\phi_1 \in \{\theta,2\theta \}$ or we are done. If we have $\phi_1 = \frac{\theta}{2}$, then the second inscribed angle theorem finds an angle $\frac{\theta}{2}$ defined by $P$ and we are done. Therefore we must have $\phi_1 = \theta = \phi_2$.

Note that if $n$ is even, then since $n\theta = \pi$, the angle $\frac{\pi}{2}$ appears in the angles defined with a centre at $p$. In fact since the angles form an arithmetic progression, the angle $\pi/2$ appears many times - in particular, of the form $A(p_i, p, p_j)$ for $p_i,p_j \in P \cap C$. However having such a right angle inscribed in the circle implies that $p_i$ and $p_j$ are antipodal, which we have assumed is not the case (else we define the angles $0,
\pi$). Therefore if $n$ is even, we are done.

Now assume that $n$ is odd. Excluding the points $p$ and $c$, there are $n-2$ remaining points of $P$ which lie on the circle. Since no two points of $P \cap C$ are antipodal, the antipodal point of $p$ does not lie in $P$. Therefore each point of $(P\setminus \{p\}) \cap C $ is either above the line connecting $p$ and $c$, or below it. Furthermore, from the structure of the angles with $\phi_1 = \phi_2 = \theta$, any point $p_i \in P \cap C$ must also have its reflection in the line connecting $p$ and $c$, call it $\overline{p_i}$, lying within $P$. But then there must be an even number of points, since we have just defined a precise pairing of the points. This is a contradiction to $n$ being odd. Therefore case (3) also determines at least $n-1$ angles.

So far, we have proved that if $P$ is a set of $n$ points determining $n-2$ angles, then all points of $P$ must be on a circle. We finish the proof by showing that these points must form a regular $n-$gon. Label the points of $P$ clockwise around the circle by $p_1,p_2,...,p_n$. Fixing any two adjacent points $p_i,p_{i+1}$, we obtain a set of $n-2$ distinct angles 
\[B_i := \{\cA(p_{i+1},p_{i},q) : q \in P \setminus \{p_{i},p_{i+1}\}\}.
\]
Just as in the previous section, the set of pinned angles at $p_{i}$ defined by $P$ is the positive difference set $\left((B_i \cup \{0\}) - (B_i \cup \{0\})\right) \cap \mathbb R^+$. Again, the only way for this to be size $n-2$ is for $B_i$ itself to be an arithmetic progression. Furthermore, this arithmetic progression must be the same for each $i$, since otherwise we have more angles. In particular, for each $i,j$ we have $\cA(p_{i+1},p_i,p_{i+2}) = \cA(p_{j+1},p_j,p_{j+2})$; this is the smallest angles defined from two neighbouring points $p_i,p_{i+1}$. Applying Angle Theorem 2 from above then yields that all internal angles of this configuration are the same, so that $P$ is indeed a regular $n-$gon. This concludes the proof of \Cref{thm:classification}.
\end{proof}

\section{Construction with two bad pins}

If we restrict our attention to the case when $P$ is in general position, then the analysis in the proof of \Cref{thm:main} becomes slightly easier. Since no line contains more than $2$ points of $P$, \Cref{lem:preprocessing} is trivially true, and in fact the statement is valid for \textit{any} choice of three distinct points $p_1,p_2,p_3 \in P$. Our argument then gives the following result.

\begin{theorem} \label{thm:genpos}
   Let $n$ be sufficiently large and let $P \subset \mathbb R^2$ be a set of $n$ points in general position. Then there is an absolute constant $c>0$ such that the inequality
   \[
   |\cA_p(P)| \geq n^{1+c}
   \]
   is valid for all but at most two elements $p \in P$.
\end{theorem}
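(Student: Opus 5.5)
We argue by contradiction. Suppose there are three distinct points $a_1,a_2,a_3 \in P$ with $|\cA_{a_i}(P)| \leq n^{1+c}$ for $i=1,2,3$, where $c>0$ is a small absolute constant fixed at the end. We run the argument of \Cref{lem:subspacetrap} with $L = n^{c}$. The difference is that we skip \Cref{lem:preprocessing} and feed in these three chosen pins directly, with $Q = P \setminus \{a_1,a_2,a_3\}$. This is legitimate because of general position: every line through $a_i$ contains at most one further point of $P$, so the map $z \mapsto \frac{z-a_i}{\overline z - \overline{a_i}}$ is injective on $Q$ and $|D_{a_i}(Q)| = |Q| = n-3$. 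These were the only properties of the pins and of $Q$ (constant $\gamma$) that the proof of \Cref{lem:subspacetrap} used.

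We then iterate \Cref{thm:weakPFR} three times, exactly as in that proof. By the lemma relating $D_p(Q)/D_p(Q)$ to $\cA_p(Q)$, and the Ruzsa triangle inequality, we have
\[
|D_{a_i}(P)D_{a_i}(P)| \ll L^2 n
\]
for each $i$. The first application gives $P_1 \subseteq Q$ with $|P_1| \gg n/L^{2C}$ and $D_{a_1}(P_1) \subseteq \Gamma_1$. Injectivity of the direction map at $a_2$ gives $|D_{a_2}(P_1)| = |P_1|$, so the doubling constant is still a power of $L$, and the second application yields $P_2$. The third application at $a_3$ yields $P' = P_3$, and we set $\Gamma = \Gamma_1\Gamma_2\Gamma_3$. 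The outcome is
\[
|P'| \gg n^{1-cC'}, \qquad \rank(\Gamma) \ll c\log n, \qquad D_{a_i}(P') \subseteq \Gamma \quad (1 \leq i \leq 3).
\]

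Next we apply \Cref{lem:subspaceapp} to $P'$ and $a_1,a_2,a_3$ with $T=3$. This is valid because general position allows at most $2$ points of $P$ on a line and at most $3$ on a circle. It gives
\[
|P'| \ll e^{C_1 \rank(\Gamma)} \leq n^{C_1C_2 c}.
\]
Combining with the lower bound yields $n^{1-cC'} \ll n^{C_1C_2c}$. This is false for large $n$ once
\[
c < \frac{1}{2(C'+C_1C_2)}.
\]
So at most two points $p$ can have $|\cA_p(P)| \leq n^{1+c}$.

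The one step needing care is checking that the proof of \Cref{lem:subspacetrap} never really used the \emph{existence} part of \Cref{lem:preprocessing}, only the bounded-multiplicity property of lines through the pins. One should also confirm that the non-degeneracy analysis in \Cref{lem:subspaceapp}, which depends on \Cref{lem:variablesdefinez} and the cross-ratio lemma, works with $T=3$ uniformly for any three pins. I expect both checks to go through directly.
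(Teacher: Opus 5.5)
Your proposal is correct and is essentially the paper's argument. The paper observes that in general position the conclusion of \Cref{lem:preprocessing} holds for any three distinct points (with $Q=P\setminus\{a_1,a_2,a_3\}$ and $\gamma=2$), so one takes the three putative bad pins and runs \Cref{lem:subspacetrap}, then \Cref{lem:subspaceapp} with $T=3$, then the exponent comparison of \Cref{thm:mainnearly}, exactly as you do; the two checks you flag are immediate, since the proof of \Cref{lem:subspacetrap} only uses the angle hypothesis at the three pins and \Cref{lem:subspaceapp} is stated for arbitrary $T$.
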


In other words, a point set in general position can have at most two bad pins. We now show that this statement is optimal, in the sense that there really do exist general position point sets with two bad pins.

\begin{construction}\label{construction}
    There exists a point set $P \subseteq \mathbb R^2$ of size $n$ which is in general position, such that there are two points $p,q \in P$ which each define only $n-2$ pinned angles.
\end{construction}

\begin{proof}
    We will construct the point set $P$ using sets of evenly spaced lines through two fixed points, which will be the two `bad' pins. Let $p = (0,0)$ and $q = (1,0)$, and let $\theta$ be a very small angle (of the order, say, $\frac{\pi}{10n}$). Consider the collection of $n-1$ lines passing through $p$ of the form
    $$\ell_i := \left\{(x,y) : y = \tan(i\theta) x \right\}, \quad i=0,...,n-2.$$
    These lines are spaced evenly with angle $\theta$ between consecutive lines. In a similar way, we define a set of $n-1$ lines passing through $q.$
    $$m_i := \left\{(x,y) : y = - \tan(2i\theta) (x-1) \right\}, \quad i=0,...,n-2.$$
    We now define the point set $P$ as the intersection points $\ell_i \cap m_i$ for $i=1,...,n-2$, together with the two points $p$ and $q$. We first claim that the point set $P \setminus \{(1,0)\}$ is contained  within the conic $C$ given by the equation $y^2 = 3x^2 - 2x$. Indeed, let $(x_i,y_i) = p_i:= \ell_i \cap m_i$. This point satisfies
    $$\frac{y_i}{x_i} = \tan(i\theta), \quad  \frac{y_i }{1 - x_i} = \tan(2i\theta) = \frac{2 \tan(i\theta)}{1 - \tan^2(i\theta)}.$$
    Substituting in, we obtain
    $$\frac{y_i}{1-x_i}= \frac{2\left(\frac{y_i}{x_i}\right)}{1- \left(\frac{y_i}{x_i}\right)^2} \implies y_i^2 = 3x_i^2 -2x_i.$$
    It is clear that $p$ lies on $C$, finishing the proof of our claim. Given that $C$ is an irreducible conic, in particular a hyperbola, it is immediate that no three points of $P\setminus \{q\}$ are collinear. We now need to show that no four points of $P$ are co-circular, and that $q$ is not collinear with two other points of $P$. This is best explained geometrically - see the \Cref{fig:2badpins} below for the curve $C$ and the points $P$.
    
\begin{figure}[h]
    \centering
    \includegraphics[width=0.8\linewidth]{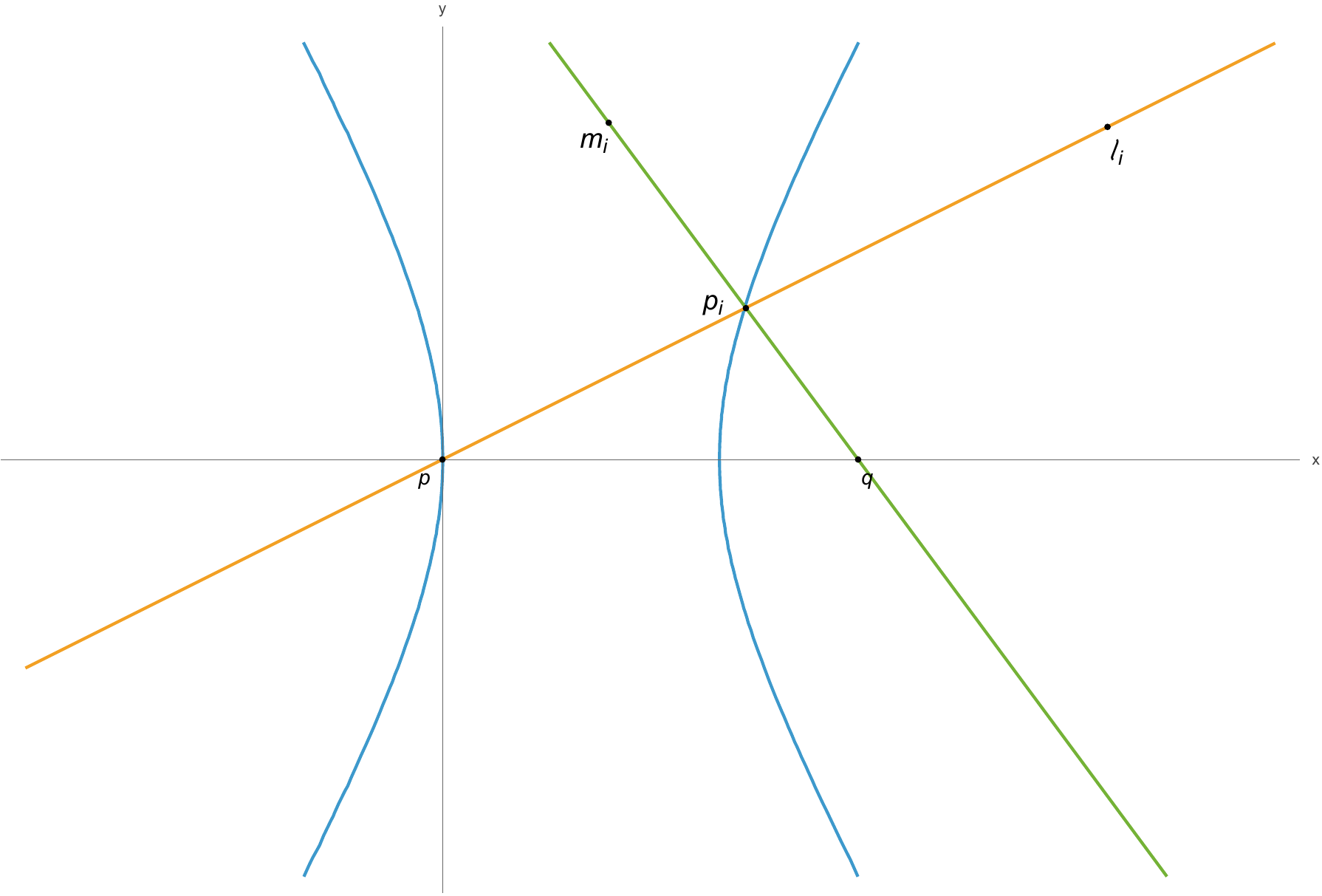}
    \caption{The points $p$ and $q$, the curve $C$, and an example point $p_i$.}
    \label{fig:2badpins}
\end{figure}

    Firstly, notice that since the lines $\ell_i$ all have positive slope and the lines $m_i$ all have negative slope, all intersection points occur in the positive quadrant of $\mathbb R^2$. Furthermore, notice that in the positive quadrant the curve $C$ is given by an implicit function $y = f(x)$ with $y'' <0$; that is, the slope of the curve is always decreasing. With this we can see that any line passing through $q$ and a point $p_i$ (or $p$) cannot intersect $C$ again in the positive quadrant; indeed since this line passes through $q$, it is below the graph of the function when it enters the positive quadrant, and can then only have a single intersection point with $y=f(x)$ - past this point the line lies strictly above the graph of the function since the function has decreasing derivative. Therefore we have proved that no three points of $P$ are collinear.

    We now wish to prove that no four points of $P$ are cocircular. We do this by representing the curve $C$ in the positive quadrant as the function
    $$y = f(x) := \sqrt{3x^2 - 2x}, \quad x > 2/3.$$
    All points of $P$ apart from $p$ and $q$ lie on the graph of this function. We prove first that no circle intersects the graph of this function in four points. Suppose we take a circle
    $$\gamma := \{(x,y) : (x-a)^2 + (y-b)^2 = r\}.$$
    Any point $(x,y)$ lying on both $\gamma$ and on the graph of $f$ must satisfy
    $$(x-a)^2 + (f(x)-b)^2 = r \implies h(x):= 4x^2 - (2a+2)x - 2bf(x) +a^2 + b^2 - r=0.$$
    Note that if $b=0$ then $h$ is quadratic in $x$ and so has at most two zeros, and we are done. Therefore we can assume that $b \neq 0$. Note that after differentiating three times, we have
    $$h'''(x) = -2bf'''(x) = \frac{-2b(9x-3)}{x^{5/2}(3x-2)^{5/2}} \text{ on } x > 2/3.$$
    Depending on the sign of $b$, the third derivative $h'''(x)$ is either always positive or always negative on $x>2/3$, since every factor in the expression above has constant sign in this interval. However by Rolle's theorem (applied three times) a function with strictly positive, (or strictly negative) third derivative on $x > 2/3$ cannot have four zeros within $(2/3,\infty)$. Therefore no four points of $P - \{p,q\}$ are cocircular.

    Next, we prove that the point $q$ is not cocircular with three other points of $P \setminus \{p\}$. To do this, we begin by writing a general circle $\gamma$ passing through $q = (1,0)$ as
    $$(x-a)^2 + (y-b)^2 = (a-1)^2 + b^2.$$
    Again using that all points of $P \setminus \{p,q\}$ lie on the graph of the function $f(x) = \sqrt{3x^2-2x}$ with $x > 2/3$, we see that any point lying on both $\gamma$ and the graph of $f$ must satisfy
    $$  4x^2 - (2a+2)x +2a-1 = 2bf(x) \implies g(x):= \frac{4x^2 - (2a+2)x +2a-1}{f(x)} = 2b.$$
    We would therefore like to prove that the function $g(x)$ cannot take the same function value (namely $2b$) more than twice on the region $x > 2/3$. We again do this by considering derivatives - we have
    $$g'(x) = \frac{(2x-1)(6x^2 - 3x +1-2a)}{x^{3/2}(3x-2)^{3/2}}.$$
    In the region $x > 2/3$, the only factor of $g'(x)$ which can change sign is $6x^2-3x+1-2a$. This quadratic curve is strictly increasing on $x>2/3,$ and therefore can only change sign once. Therefore $g(x)$ can only take the value $2b$ at most twice in this region, proving that $q$ is not cocircular with three other points of $P \setminus \{p\}$.

    Next we must prove that $p$ is not cocircular with three other points of $P \setminus \{q\}$. This is essentially the same as the previous proof, so we do not repeat it. Finally, we must show that there is no circle through $p$ and $q$ which passes through two other points of $P$. This is also similar to the previous cases - consider a circle passing through $(0,0)$ and $(1,0)$ given by the equation
    $$(x-1/2)^2 + (y-b)^2 = 1/4 + b^2.$$
    Intersecting this equation with $y = f(x)$ as above, one finds an equation 
    $$g(x) := \frac{4x^2 - 3x}{f(x)} = 2b,$$
    whose solutions for $x>2/3$ correspond to intersection points of the circle with $y=f(x)$, but where the first derivative 
    $$g'(x) = \frac{3x(2x-1)^2}{x^{1/2}(3x-2)^{3/2}}$$
    is strictly positive for $x>2/3$ - and therefore $g(x)$ can only take value $2b$ at most once, so that the circle can only contain at most one more point $p_i$ from $P$. This concludes the proof of \Cref{construction}, since it is clear that each of $p$ and $q$ determine only $n-2$ angles to the rest of $P$.
    
    \end{proof}

    \section*{AI involvement in this research}
    The research presented in this paper has benefitted from AI collaboration, namely ChatGPT 5.6 Sol. The main contribution from AI was that the subspace theorem can be used in the argument, leading to a resolution of the problem for the general position case. Apart from this, all other ideas of the proof were created by the authors. Everything in this article is human written and verified.

     We would also like to take some time here to try to give some credit to those works which may have inspired the AI contribution to this work; a potentially thorny issue arising in this new age of AI and mathematics. The work of Chang \cite{Ch} in relating the subspace theorem to the sum-product problem is likely to have been influential here. A paper of Schwartz \cite{Sc} used the subspace theorem in the discrete geometric context, using it to bound the maximum number of occurrences of a unit distances determined by $P \subset \mathbb R^2 \cong \mathbb C$ if the point set is contained in a low rank multiplicative subgroup.

    \section*{Acknowledgements} We are very grateful to Misha Rudnev for introducing us to the problem of angles determined by point sets in general position, and for many interesting discussions we have had about it over several years, as well as for giving helpful feedback on an earlier draft of this paper. We also thank Thomas Bloom, Jakob Führer, Michalis Kokkinos, Sam Mansfield, Jonathan Passant and Micha Sharir for the helpful conversations we have had with them concerning this problem. Krishnendu Bhowmick was funded by a grant from the United States-Israel Binational Science Foundation (BSF), Jerusalem, Israel, and the United States National Science Foundation (NSF). Oliver Roche-Newton and Audie Warren were partially supported by the Austrian Science Fund (FWF) project PAT2559123.

\bibliography{decompose}
\bibliographystyle{plain}

\end{document}